\newtheorem{theorem}{Theorem}[section]
\newtheorem{corollary}[theorem]{Corollary}
\newtheorem{lemma}[theorem]{Lemma}
\newtheorem{proposition}[theorem]{Proposition}
\newenvironment{proof}[1][Proof]{\noindent\textbf{#1.} }{\ \rule{0.5em}{0.5em}}
\begin{document}
\title{A real chain condition for groups}
\author{Ulderico Dardano\ \ {\small \textsl{and}}\ \ Fausto De Mari}
\date{}
\maketitle

\begin{center}
\vskip -0.95cm \texttt{{\small \phantom{ ......}dardano@unina.it%
\phantom{
.....}\ fausto.demari@unina.it}}\\[0pt]
\vskip 0.3cm Universit\`a degli Studi di Napoli ``Federico II'' \\[0pt]
\end{center}

\abstract{\noindent  We consider a very weak chain condition for a
poset, that is the absence of subsets which are order isomorphic to the set
of real numbers in their natural ordering; we study generalised radical groups in which  this
finiteness condition is set on the poset of subgroups which do not have certain 
properties which are generalizations of normality. This completes many
previous results which considered (apparently) stronger chain conditions.}

\bigskip \noindent \textbf{Mathematics Subject Classification (2020):}
20F19, 20E15, 20F22, 20F24.

\smallskip \noindent \textbf{Keywords:} deviation of a poset, almost normal subgroup, nearly normal subgroup, subnormal subgroup, permutable subgroup, minimax group.

\section{Introduction}

The study of groups $G$ with some finiteness condition on the poset of subgroups of $G$ with (or without) given  properties has been object of many investigations; see for example \cite{DS} for a survey up
to 2009 and \cite{DDM23,DDMR,DM07,DM18,DM20,GKR,GKR1,KS,T03} for more recent contributions.
The most elegant and popular ones have been perhaps those concerning the
restrictions on the \emph{chains} - i.e. totally ordered subsets - of $\chi$ (or non-$\chi$) subgroups. Here, as throughout the paper, the letter $\chi$ denotes a
property pertaining to subgroups.

\medskip
Recall that a poset $P$ of subgroups a group is said to have the {\it weak maximal condition} (Max-$\infty$), the {\it weak minimal condition} (Min-$\infty$), the {\it weak double chain condition} (DCC-$\infty$), if
$P$ does not contain chains whose order type is the same of the
natural numbers $\mathbb{N}$, as the negative integers $-\mathbb{N}$, as
the whole set of integers $\mathbb{Z}$ respectively and, furthermore, in which each  subgroup  has  infinite index in the successive (see \cite{
Z}).  When $P$ is the poset of all $\chi$ (or non-$\chi$) subgroups of a group $G$, it is usually said that $G$ has the weak maximal condition on $\chi$ (non-$\chi$, respectively) subgroups whenever $P$ has Max-$\infty$. The corresponding terminology is used for Min-$\infty$ and DCC-$\infty$.

\medskip
 It is clear that Max-$\infty$, as well as Min-$\infty$, imply DCC-$\infty$, and many investigations have been carried out in order to find for which poset $P$ of subgroups (or, equivalentely, for which property $\chi$) these conditions are all equivalent. However, due to the existence of the so-called Tarski groups (i.e. infinite simple groups whose proper non-trivial subgroups have prime order), investigations in this area  are usually completed under a suitable (generalised) solubility condition. Zaicev \cite{Z} proved that a locally (soluble-by-finite) group has DCC-$\infty$  (on all subgroups) if and only if it has Max-$\infty$ or Min-$\infty$; moreover, any locally (soluble-by-finite) group with DCC-$\infty$ is a soluble-by-finite minimax group. Recall that a group is said to be {\it minimax} if it has a finite series whose factor satisfy either the minimal or the maximal condition.

\medskip
We say that a poset of subgroups of a group has the {\it real chain condition} ($RCC$) if it does not contain chains whose order type is the same as the set of real numbers $\mathbb{R}$  with their usual ordering. Note that DCC-$\infty$ implies RCC (see Proposition \ref{WDDC_implies_deviation}, below). Thus, we are interested in finding for which properties $\chi$ the following holds. 

\medskip
\noindent \textbf{Framework Statement$\;\;\;$}{\it Let  $\chi$ be a
property pertaining to subgroups. For a generalised radical group, the following are equivalent:
\begin{itemize}
\item[(i)]  the weak minimal condition on non-$\chi$ subgroups; 

\item[(ii)] the weak maximal condition on non-$\chi$ subgroups; 

\item[(iii)] the weak double chain condition on non-$\chi$ subgroups.

\item[(iv)]  the real chain condition on non-$\chi$ subgroups;
\end{itemize}}

\medskip
Recall that a group is said {\it generalised radical} when it has an ascending (normal) series with locally (nilpotent or finite) factors.

\medskip
In the following sections, it will be shown that the above Framework Statement holds when $\chi$ is one the properties: $n$=normal; $an$=almost normal; $nn$=nearly normal; $m$=modular; $per$=permutable. Recall that for each of the previous choiches for the property $\chi$ and for a generalised radical group $G$,  the equivalence between the first three condition in the above Framework Statement is already known and it is also known that, with the exception of $\chi=an$ or $\chi=nn$, each of these three items is equivalent to the property that {\em either $G$ is a soluble-by-finite minimax group or all subgroups of $G$ have $\chi$}  (see \cite{CK,DM07,DM18,KG}). We will also prove the Framework Statement holds when $\chi$ is the property for a subgroup to be subnormal (but only in periodic soluble case). We will give futher references, detailed statements
and proofs for the different properties in the following corresponding
sections. For undefined terminology and basic results we refer to \cite
{R72,R96}.

\medskip
Until now, the most general chain condition in the theory of infinite groups appears to be the condition that the
poset of non-$\chi$ subgroups has the (set
theoretical) {\it deviation} (see \cite{GKR,GKR1,KS,T03}). We do not recall here
the (recursive) definition of the deviation since, for our purposes, it is
enough to recall the fact that a poset has deviation if and only if it
contains no sub-poset order isomorphic to the poset $D$ of all dyadic rationals $m/2^n$ in the interval from $0$ to $1$ (see \cite{CR}, 6.1.3). Since $D$ is a countable dense poset without endpoints, it is order-isomorphic to the rational numbers by Cantor's isomorphism theorem. 
Therefore  \emph{a poset has deviation if and only if it
contains no sub-poset order isomorphic to the poset $\mathbb{Q}$ of rational
numbers} in their usual ordering. Clearly, any poset with deviation has RCC and the converse holds when the poset is complete but it does not seem always to be the case; moreover, any poset  of subgroups with DCC-$\infty$ has deviation and so also RCC (see Proposition \ref{WDDC_implies_deviation}, below). Therefore as a consequence of our Framework Statement, the equivalence between RCC and deviation is proved true for the poset of all non-$\chi$ subgroups of a group for our selection of properties\;$\chi$.

\section{Preliminary results}

For brevity, we call $\mathbb{Z}$-, $\mathbb{Q}$- or $\mathbb{R}$-chain a poset with
the same order type as $\mathbb{Z}$, $\mathbb{Q}$ or $\mathbb{R}$ respectively.

\medskip
\begin{proposition}
\label{WDDC_implies_deviation} \label{compl}Let $P$ be a poset of subgroups of a group $G$. Then
\begin{itemize}
\item[(i)] If $P$ has DCC-$\infty$, then $P$ has deviation.
\item[(ii)] If $P$ has deviation, then $P$ has RCC.
\item[(iii)] If $P$ is complete and has RCC, then $P$ has deviation.
\end{itemize}
\end{proposition}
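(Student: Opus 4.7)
My plan is to handle the three implications essentially independently. For (ii), I would simply observe that $\mathbb{R}$ contains $\mathbb{Q}$ as a linearly ordered subset in the natural way, so any $\mathbb{R}$-chain in $P$ restricts at once to a $\mathbb{Q}$-chain. By the characterisation recalled in the introduction (a poset has deviation if and only if it contains no $\mathbb{Q}$-chain), the existence of an $\mathbb{R}$-chain would violate the hypothesis that $P$ has deviation, so (ii) is a one-line check.

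For (iii), I would argue by contradiction and show that, under completeness, any $\mathbb{Q}$-chain in $P$ can be ``thickened'' to an $\mathbb{R}$-chain by a Dedekind-cut construction. Given a hypothetical $\mathbb{Q}$-chain $(q_\alpha)_{\alpha\in\mathbb{Q}}$ in $P$, I would set
\[
x_r := \sup\{q_\alpha : \alpha\in\mathbb{Q},\ \alpha<r\} \qquad (r\in\mathbb{R}),
\]
using completeness to guarantee the supremum exists in $P$. Density of $\mathbb{Q}$ in $\mathbb{R}$ then forces the map $r\mapsto x_r$ to be strictly increasing: for $r<s$, pick rationals $\alpha,\beta$ with $r<\alpha<\beta<s$; then $x_r\le q_\alpha<q_\beta\le x_s$. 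This produces the forbidden $\mathbb{R}$-chain, contradicting RCC, and yields deviation. This is the main conceptual step and where I expect to spend most of the care.

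For (i) I would argue contrapositively: starting from a $\mathbb{Q}$-chain $(q_\alpha)_{\alpha\in\mathbb{Q}}$ in $P$, set $H_n:=q_n$ for $n\in\mathbb{Z}$ and show that the $\mathbb{Z}$-chain $(H_n)_{n\in\mathbb{Z}}$ has $[H_{n+1}:H_n]=\infty$ for every $n$, thereby violating DCC-$\infty$. The $\mathbb{Q}$-chain supplies infinitely many distinct subgroups strictly between $H_n$ and $H_{n+1}$ (one for every rational in $(n,n+1)$); but for subgroups $H\le K$ of a group with $[K:H]$ finite, the interval $[H,K]$ of the subgroup lattice is itself finite, since any intermediate subgroup $L$ is determined by the set of $H$-cosets it contains, a subset of the finite set $K/H$. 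Hence every consecutive index must be infinite, as required. The main obstacle I foresee is the strict-monotonicity check in the Dedekind-cut step of (iii); parts (i) and (ii) reduce to, respectively, the elementary finite-index observation just recalled and a trivial restriction of orders.
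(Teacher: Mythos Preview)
Your proposal is correct and follows essentially the same route as the paper: contrapositive via a $\mathbb{Q}$-chain restricted to $\mathbb{Z}$ for (i), a trivial restriction for (ii), and the Dedekind-cut supremum construction for (iii). You supply more detail than the paper does (the finite-interval argument in (i) and the strict-monotonicity check in (iii)), but the underlying ideas coincide.
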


\begin{proof}
$(i)$ Assume that $P$ has not deviation, so it contains a strictly increasing family 
$(X_i)_{i\in\mathbb{Q}}$ of subgroups, then $(X_i)_{i\in\mathbb{Z}}$ is a $%
\mathbb{Z}$-chain in which for each $i\in\mathbb{Z}$ the index $%
|X_{i+1}:X_i| $ is infinite because there are infinitely many subgroups $X_j$
with $i<j<i+1 $ and $j\in\mathbb{Q}$. Hence $P$ has not DCC-$\infty$.

$(ii)$ is trivial.

$(iii)$ Assume that $P$ does not contain $\mathbb{R}$-chains and, for a
contradiction, let $(X_i)_{i\in\mathbb{Q}}$ be a $\mathbb{Q}$-chain of
elements of $P$. Since $P$ is complete, if $Y_r=\sup\{X_i:i\leq r\}$ for
every $r\in \mathbb{R}$, we obtain that $(Y_i)_{i\in\mathbb{R}}$ is an $%
\mathbb{R}$-chain of $P$, a contradiction.
\end{proof}

\medskip

Since the poset of \textsl{all }subgroups of a group is complete, it follows by
Proposition \ref{WDDC_implies_deviation} that a group has RCC for 
\textsl{all} subgroups if and only if it has deviation. On the other hand, any
direct product of infinite non-identity groups does contain an $\mathbb{R}$%
-chain as we state in next elementary lemma.

\medskip

\begin{lemma}
\label{lemma0} If a group $G$ is the direct product of infinitely many
non-trivial subgroups, then $G$ has not RCC. 
\end{lemma}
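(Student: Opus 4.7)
The plan is to produce a strictly increasing family of subgroups indexed by $\mathbb{R}$, after reducing to the countable case. Since $G$ contains infinitely many non-trivial direct factors, pick a countably infinite subfamily $\{G_n\}_{n\in\mathbb{N}}$ of them. Their internal direct product
\[
H \;=\; \bigoplus_{n\in\mathbb{N}} G_n
\]
is a subgroup of $G$, and any chain of subgroups of $H$ is also a chain of subgroups of $G$. Hence it suffices to exhibit an $\mathbb{R}$-chain inside the subgroup lattice of $H$.

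To build such a chain I would encode real numbers by subsets of $\mathbb{N}$ via an enumeration of the rationals. Choose a non-trivial element $x_n\in G_n$ for each $n\in\mathbb{N}$, and fix an enumeration $\mathbb{Q}=\{q_n:n\in\mathbb{N}\}$. For every real $r$ set
\[
A_r \;=\; \{\, n\in\mathbb{N} : q_n<r\,\} \quad\text{and}\quad K_r \;=\; \langle x_n : n\in A_r\rangle \le H.
\]
If $r\le s$ then $A_r\subseteq A_s$, so $K_r\le K_s$. If $r<s$, density of $\mathbb{Q}$ in $\mathbb{R}$ yields some $q_n$ with $r<q_n<s$, so $n\in A_s\setminus A_r$; by the defining property of the internal direct product, $x_n\notin\langle G_m:m\neq n\rangle$ and in particular $x_n\notin K_r$. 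Hence $K_r\subsetneq K_s$, so $r\mapsto K_r$ is a strictly increasing injection $\mathbb{R}\to\{\text{subgroups of }H\}$; its image is a chain of order type $\mathbb{R}$, contradicting RCC.

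There is no real obstacle: the reduction to a countable subfamily is immediate from the hypothesis, and the only genuine verification is the strictness of the inclusions, which comes down to the elementary fact that a non-trivial element of a direct factor cannot lie in the subgroup generated by the remaining factors. The pleasant point is that one does not have to work with $\mathbb{R}$ directly -- the density and countability of $\mathbb{Q}$ do all the job.
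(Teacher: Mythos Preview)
Your proof is correct and follows essentially the same idea as the paper: reduce to a countable direct product, index the factors by $\mathbb{Q}$ (the paper does this directly, you do it via an enumeration $n\mapsto q_n$), and for each $r\in\mathbb{R}$ take the subgroup generated by the factors whose rational index is below $r$. The only cosmetic difference is that the paper uses the full direct factors $G_i$ while you pick a single non-trivial element $x_n\in G_n$; either choice works and the verification of strict inclusions is the same Dedekind-cut argument.
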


\begin{proof}
Clearly $G$ contains the direct product of countably many non-trivial
subgroups and hence a subgroup of the form $\underset{i\in\mathbb{Q}}{%
\mathrm{Dr}}{\ G_{i}}$. Then, an $\mathbb{R}$-chain is formed by the
subgroups $G_r=\underset{i<r}{\mathrm{Dr}}{\ G_{i}}$, for $r\in\mathbb{R}$.
\end{proof}

\bigskip
Tushev proved that \emph{a soluble group
has deviation if and only it is minimax} (see \cite{T03}, Lemma 4.4). This result can be extended as in
Theorem \ref{TheoremA} below. Before, let us state a standard general result
that will be used in what follows to reduce our investigation to
radical-by-finite groups, where \textit{radical} means that the group has an ascending (normal) series whose factors are locally nilpotent.

\medskip
\begin{proposition}
\label{GenRad}Let $\mathfrak{X}$ be a class of groups which closed with
respect to forming subgroups and homomorphic images, such that each locally
finite $\mathfrak{X}$-group is soluble-by-finite. Then any generalised
radical $\mathfrak{X}$-group $G$ is radical-by-finite.
\end{proposition}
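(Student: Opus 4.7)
The plan is to proceed by transfinite induction on the length $\lambda$ of an ascending normal series $1=G_0\triangleleft G_1\triangleleft\cdots\triangleleft G_\lambda=G$ whose factors are either locally nilpotent or locally finite; such a series is available because $G$ is generalised radical. Before starting the induction I would record three facts: (i) the class of radical groups is closed under subgroups, quotients and extensions, so every group $X$ admits a unique maximal normal radical subgroup $\rho(X)$, which is characteristic and has finite index in $X$ whenever $X$ is radical-by-finite; (ii) subgroups and quotients of generalised radical groups are again generalised radical; and (iii) $\mathfrak{X}$ is subgroup- and quotient-closed, so each $G_\alpha$ and each $G/G_\alpha$ remains in $\mathfrak{X}$.

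For the successor step $\lambda=\mu+1$, I would apply the inductive hypothesis to $G_\mu$ to obtain that $R_0:=\rho(G_\mu)$ is normal in $G$ (being characteristic in $G_\mu\triangleleft G$), is radical, and $F:=G_\mu/R_0$ is finite. Writing $H:=G/R_0$, the quotient $H/F\cong G/G_\mu$ is either locally nilpotent or locally finite. In the locally finite case $H$ itself is locally finite, lies in $\mathfrak{X}$, and is therefore soluble-by-finite by the hypothesis. In the locally nilpotent case I would observe that $C_H(F)$ has finite index in $H$ (since $H/C_H(F)$ embeds into the finite group $\mathrm{Aut}(F)$) and that $C_H(F)\cap F\leq Z(F)$ is a finite central subgroup of $C_H(F)$ with locally nilpotent quotient; every finitely generated subgroup of $C_H(F)$ is then nilpotent modulo a finite central subgroup, hence itself nilpotent, so $C_H(F)$ is locally nilpotent. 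In either case $H$ is radical-by-finite, and the correspondence theorem lifts this to the desired normal radical subgroup of $G$ of finite index.

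For the limit step I would set $R_\alpha:=\rho(G_\alpha)$ for each $\alpha<\lambda$ and note that each $R_\alpha$ is normal in $G$, radical, and of finite index in $G_\alpha$ by the inductive hypothesis. The inclusion $R_\alpha\leq R_\beta$ for $\alpha\leq\beta$ follows because $G_\alpha\cap R_\beta$ is a normal radical subgroup of $G_\alpha$, hence lies in $R_\alpha$; in particular $G_\alpha\cap R=R_\alpha$, where $R:=\bigcup_\alpha R_\alpha$. Thus $R$ is a normal radical subgroup of $G$ and $G/R=\bigcup_\alpha G_\alpha R/R$ is a directed union of the finite normal subgroups $G_\alpha R/R\cong G_\alpha/R_\alpha$, in particular locally finite; the hypothesis on $\mathfrak{X}$ then forces $G/R$ to be soluble-by-finite, and its soluble radical pulls back to the required finite-index normal radical subgroup of $G$. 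The step I expect to require the most care is the successor case with $H/F$ locally nilpotent, which relies on the central-finite-by-locally-nilpotent lemma above; the limit case, by contrast, reduces fairly cleanly to the hypothesis on $\mathfrak{X}$ once the compatibility $R_\alpha\leq R_\beta$ has been established.
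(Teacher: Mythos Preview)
Your argument is correct in substance but follows a genuinely different route from the paper. The paper argues directly, without induction: it fixes a maximal normal radical subgroup $K$ of $G$ and a maximal normal locally finite subgroup $T/K$ of $G/K$, uses the hypothesis on $\mathfrak{X}$ to force $T/K$ to be finite, and then shows that $G/T$ has no nontrivial normal subgroup that is either locally nilpotent or locally finite (the first because such a subgroup would centralise the finite group $T/K$ and hence give a nontrivial locally nilpotent normal subgroup of $G/K$, contradicting the maximality of $K$; the second by the maximality of $T/K$). Since $G$ is generalised radical this forces $G=T$. This is shorter and avoids transfinite bookkeeping; your inductive approach, by contrast, is more explicit about how the radical of $G$ is assembled from the radicals of the terms $G_\alpha$, and your successor-step analysis of the finite-by-(locally nilpotent) case is a nice self-contained replacement for the paper's maximality trick.

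One small slip to repair in your limit step: the sentence you give to justify $R_\alpha\leq R_\beta$ actually proves the opposite containment $G_\alpha\cap R_\beta\leq R_\alpha$. The inclusion you need follows instead from the observation that $R_\alpha$ is a normal radical subgroup of $G_\beta$ (it is normal in $G$ and contained in $G_\alpha\leq G_\beta$), hence $R_\alpha\leq\rho(G_\beta)=R_\beta$. With that correction your identity $G_\alpha\cap R=R_\alpha$ and the remainder of the limit argument go through as written.
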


\begin{proof}
Let $K$ a subgroup of $G$ which is maximal with respect to be radical and
normal in $G$, and let $T/K$ be a subgroup of $G/K$ which is maximal with
respect to be locally finite and normal in $G/K$. By the properties of $%
\mathfrak{X}$ we have that there is a normal subgroup $S$ of $T$ containing $
K$ such that $S/K$ is soluble and $T/S$ is finite. Then $S^G/K$ is soluble
and so, by the maximality of $K$, we have that $S^G=K$. Hence $T/K$ is finite
and $T$ is radical-by-finite.

Let $H/T$ be any normal subgroup of $G/T$ which is locally nilpotent, then $%
C_{H/K}(T/K)$ is a normal locally nilpotent subgroup of $G/K$. Since $G/K$
has no non-trivial locally nilpotent normal subgroups, we have that $%
C_{H/K}(T/K)$ is trivial; hence $H/K$ is finite. It follows that $H=T$.
Therefore $G/T$ has no non-trivial subgroups which are either locally
nilpotent or locally finite; on the other hand, $G$ is generalised radical
and hence $G=T$ is radical-by-finite.
\end{proof}

\bigskip
\begin{theorem}
\label{TheoremA} Let $G$ be a generalised radical group. Then the poset of
all subgroups of $G$ has RCC if and only if $G$ is a soluble-by-finite minimax group.
\end{theorem}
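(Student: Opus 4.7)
The ``if'' direction is standard. A soluble-by-finite minimax group admits a finite series whose factors are cyclic or quasicyclic (Pr\"ufer), from which DCC-$\infty$ on the whole subgroup lattice follows by Zaicev's theorem; Proposition \ref{WDDC_implies_deviation}(i)--(ii) then gives RCC.

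For the converse, suppose $G$ is a generalised radical group whose subgroup lattice satisfies RCC. Because this lattice is complete, Proposition \ref{WDDC_implies_deviation}(iii) upgrades the hypothesis to the statement that the lattice has deviation. Let $\mathfrak{X}$ be the class of groups whose subgroup lattice has deviation. Then $\mathfrak{X}$ is obviously closed under subgroups, and it is closed under quotients because any $\mathbb{Q}$-chain in a quotient $G/N$ lifts through the canonical projection to a $\mathbb{Q}$-chain in $G$ of the same order type. By Proposition \ref{GenRad}, to reduce $G$ to the radical-by-finite case it suffices to check that every locally finite $\mathfrak{X}$-group is soluble-by-finite. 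For this step I would argue that every abelian subgroup of such a group is itself in $\mathfrak{X}$, hence minimax by Tushev's theorem (\cite{T03}, Lemma 4.4), and therefore Chernikov (being additionally locally finite abelian); then Shunkov's classical theorem, stating that a locally finite group all of whose abelian subgroups are Chernikov is itself Chernikov, forces the whole group to be soluble-by-finite.

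The class of soluble-by-finite minimax groups is stable under finite extensions, so we may now assume that $G$ is a radical $\mathfrak{X}$-group. Let $H$ be the Hirsch--Plotkin radical of $G$. The quotient $G/H$ is again radical and has no non-trivial locally nilpotent normal subgroup -- otherwise that subgroup would pull back to a locally nilpotent normal subgroup of $G$ properly containing $H$, contradicting maximality -- so the ascending locally-nilpotent-factor series of $G/H$ is trivial and $G=H$ is itself locally nilpotent. Once $G$ is shown to be soluble, a final invocation of Tushev's theorem yields ``minimax'' and the proof is complete.

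The main obstacle is exactly this last passage: proving that a locally nilpotent group with deviation is soluble. Here I would exploit Lemma \ref{lemma0}, which forbids infinite direct sums in any abelian section and therefore (together with Tushev's theorem) forces every abelian subgroup to be minimax of finite rank, and combine this with the absence of $\mathbb{Q}$-chains of terms of the upper central series to cap the nilpotency class at a finite value. This delicate structural step, in which deviation must be used simultaneously to bound ranks of abelian sections and to forestall transfinite central series, is the heart of the argument; the soluble-minimax conclusion then follows by Tushev.
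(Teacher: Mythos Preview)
Your argument tracks the paper's proof through the reduction to radical-by-finite: the ``if'' direction, the upgrade from RCC to deviation via completeness of the full subgroup lattice, Tushev's lemma forcing every abelian subgroup to be minimax, Shunkov's theorem for the locally finite case, and Proposition~\ref{GenRad} are exactly what the paper does.

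The divergence, and the gap, is at the final step. The paper simply observes that a radical group all of whose abelian subgroups are minimax is a soluble minimax group, citing \cite{R72} Part~2, Theorem~10.35; applying this to a radical subgroup of finite index in $G$ finishes the proof at once. You instead attempt to reduce first to the locally nilpotent case and then to control the structure by hand, and both stages fail.

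The reduction to locally nilpotent is incorrect. If $N/H$ is a non-trivial locally nilpotent normal subgroup of $G/H$ (with $H$ the Hirsch--Plotkin radical of $G$), its preimage $N$ need not be locally nilpotent in $G$: an extension of a locally nilpotent group by a locally nilpotent group is in general only radical. Thus the maximality of $H$ is not contradicted, and you cannot conclude $G=H$. Radical groups are precisely those for which the \emph{transfinite} iterated Hirsch--Plotkin series exhausts the group; there is no reason for it to terminate at the first step.

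Even granting local nilpotence, you yourself flag the passage to solubility as only a sketch; forbidding $\mathbb{Q}$-chains among the terms of the upper central series does not by itself bound the nilpotency class, and this ``delicate structural step'' is exactly what Robinson's theorem packages. The clean fix is to drop the detour and invoke \cite{R72} Part~2, Theorem~10.35 directly once you know $G$ is radical-by-finite with every abelian subgroup minimax.
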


\begin{proof}
If $G$ is minimax, it is extension of groups with either the minimal or the
maximal condition and hence $G$ certainly has RCC, as the property RCC is closed under extensions as a standard argument shows.

Conversely, notice first that in any group with RCC, each abelian subgroup
is minimax by Proposition \ref{WDDC_implies_deviation} and the already quoted result by Tushev (see \cite{T03}, Lemma 4.4). In particular, any locally finite group with RCC is a
Chernikov group by a celebrated result by Shunkov \cite{Shunkov} (and hence
is soluble-by-finite). Therefore if $G$ is a generalised radical group with
RCC, then $G$ is radical-by-finite by Proposition \ref{GenRad} and so it is a soluble-by-finite minimax group (see \cite{R72} Part 2, Theorem\;10.35).
\end{proof}

\medskip
Let us state now a technical general key lemma which will be useful later.

\medskip
\begin{lemma}
\label{1} Let $G$ be a group having a section $H/K$ which is the direct product of an infinite collection $(H_{\lambda }/K)_{\lambda \in \Lambda }$ of non-trivial subgroups, and let $L$ be a
subgroup of $G$ such that $L\cap H\leq K$ and $\left\langle H_{\lambda
},L\right\rangle =H_{\lambda }L$ for each $\lambda $.
If there is no $\mathbb{R}$-chain of non-$\chi$ subgroups of $G$ in the
interval $[H/K]$, then there exists a normal subgroup $H^{\ast }$ of $H$
containing $K$ such that $LH^{\ast }=H^{\ast }L$ is a $\chi $-subgroup of $G$.
\end{lemma}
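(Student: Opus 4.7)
The plan is to argue by contradiction. Suppose that for every normal subgroup $H^{\ast}$ of $H$ containing $K$ the product $LH^{\ast}$ fails to be a $\chi$-subgroup of $G$, and exhibit, from the direct decomposition of $H/K$, a forbidden $\mathbb{R}$-chain of non-$\chi$ subgroups of $G$ parametrised by the interval $[K,H]$.

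The first key step is to upgrade permutability of $L$ from individual factors to arbitrary ``partial products.'' For $S \subseteq \Lambda$ put $H_S = K\cdot\prod_{\lambda\in S} H_{\lambda}$. Because $H/K$ is a direct product, each $H_{\lambda}$ is normal in $H$ and any two factors permute as subsets, so for finite $S$ a straightforward induction from $LH_{\lambda} = H_{\lambda}L$ yields $LH_S = H_S L$; and for arbitrary $S$, any element of $H_S$ lies in some finite partial product, so $LH_S = \bigcup_F LH_F = \bigcup_F H_F L = H_S L$, making $LH_S$ a subgroup of $G$. Once this is in hand I fix an injection $q\mapsto\lambda_q$ of $\mathbb{Q}$ into $\Lambda$ and set $S_r = \{\lambda_q : q<r\}$ for each $r\in\mathbb{R}$; density of $\mathbb{Q}$ in $\mathbb{R}$ together with the directness of the sum in $H/K$ shows that $(H_{S_r})_{r\in\mathbb{R}}$ is a strictly increasing $\mathbb{R}$-chain of normal subgroups of $H$ in $[K,H]$. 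Setting $M_r := LH_{S_r}$, the assumption $L \cap H \le K$ preserves strictness after multiplying by $L$: if $M_r = M_{r'}$ for $r<r'$ and $h \in H_{S_{r'}}\setminus H_{S_r}$, writing $h = \ell h'$ with $\ell\in L$ and $h'\in H_{S_r}$ forces $\ell \in L\cap H \le K \le H_{S_r}$, a contradiction. By the standing hypothesis each $M_r$ is non-$\chi$, and the modular law gives $M_r\cap H = (L\cap H)H_{S_r} = H_{S_r}$, so that $(M_r)_{r\in\mathbb{R}}$ is the forbidden $\mathbb{R}$-chain of non-$\chi$ subgroups of $G$ sitting in the interval $[H/K]$, the required contradiction.

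The main obstacle I anticipate is securing the global permutability identity $LH_S = H_S L$ for every $S \subseteq \Lambda$ and not merely for singletons. The hypothesis only supplies permutability of $L$ with each individual $H_{\lambda}$, so this step genuinely exploits the direct product structure of $H/K$ (which forces the factors to pairwise permute modulo $K$) in order to propagate permutability from single factors first to all finite, and then to all arbitrary, partial products. Everything else is routine: a Dedekind-cut embedding of $\mathbb{R}$ into the power set of $\mathbb{Q}$, plus the condition $L\cap H \le K$ to lift the strict chain $(H_{S_r})$ to a strict chain of the $LH_{S_r}$'s.
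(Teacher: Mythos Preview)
Your argument is correct and follows essentially the same route as the paper's: index the factors by $\mathbb{Q}$, form the partial products $K_r=\mathrm{Dr}_{i<r}H_i$ (your $H_{S_r}$), observe $LK_r=K_rL$, use $L\cap H\le K$ to show the chain $(LK_r)_{r\in\mathbb{R}}$ is strictly increasing, and conclude that some member must be a $\chi$-subgroup. The only cosmetic differences are that you spell out why permutability propagates from single factors to arbitrary partial products (the paper simply asserts $\langle K_r,L\rangle=K_rL$), and you verify strictness by an element chase where the paper invokes Dedekind's modular law; your final remark that the $M_r$ ``sit in the interval $[H/K]$'' is not literally true when $L\nleq H$, but the paper's own proof does not verify this clause either, and in every application the hypothesis is the global RCC on non-$\chi$ subgroups.
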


\begin{proof}
Clearly the set $\Lambda$ may be assumed to be countable, so that it can be
replaced by the set $\mathbb{Q}$ of the rationals. Consider the subgroup $%
K_r=\underset{i<r}{\mathrm{Dr}}H_{i}$ for each $r\in\mathbb{R}$; then $%
\left\langle K_{r},L\right\rangle =K_{r}L$ for each $r\in\mathbb{R}$. Let $%
r_1,r_2\in\mathbb{R}$ with $r_1<r_2$, then $K_{r_1}< K_{r_2}$. If were $%
K_{r_1}L= K_{r_2}L$, since $L\cap K_{r_2}\leq L\cap H\leq K\leq K_{r_1}$,
Dedekind's Modular Law would give that 
\begin{equation*}
K_{r_2}=K_{r_2}L\cap K_{r_2}=K_{r_1}L\cap K_{r_2}=K_{r_1}(L\cap
K_{r_2})=K_{r_1};
\end{equation*}
this contradiction proves that $K_{r_1}L< K_{r_2}L$. Therefore $K_rL$ must
be a $\chi $-subgroup of $G$ for some $r\in\mathbb{R}$ and the lemma holds
with $H^{\ast }=K_r$.
\end{proof}

\medskip 
The next result applies when $\chi$ is the property of being a normal
subgroup or, more generally, when $\chi$ is the property of being $\Gamma$
-invariant for some subgroup $\Gamma $ of the automorphism group of the
group. It also holds when $\chi $ is one of the the properties $an$, $nn$
(see \cite{M}, Lemma 1), $sn$ (see \cite{R96}, 13.14 and 13.1.5);
moreover, item $(i)$ holds also for the properties $m$ and $per$ (see \cite{S94}, pp.201-202).

\medskip

\begin{lemma}
\label{2} Let $G$ be a group with RCC on non-$\chi$ subgroups. If $G$ contains a section $H/K$ which is the
direct product of an infinite collection of non-trivial subgroups, then the
following hold:

\begin{itemize}
\item[(i)] if $\chi $ is such that $\left\langle X,Y\right\rangle$ is a $%
\chi $-subgroup of $G$ whenever $X$ and $Y$ are $\chi $-subgroups of $G$ such that $X^Y=X$, 
then $H$ is a $\chi$-subgroup of $G$;

\item[(ii)] if $\chi $ is such that the intersection $X\cap Y$ is a $\chi $%
-subgroup of $G$ whenever $X$ and $Y$ are $\chi $-subgroups of $G$, then $K$
is a $\chi $-subgroup of $G$.
\end{itemize}
\end{lemma}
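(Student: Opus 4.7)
The natural approach is to apply Lemma~\ref{1} after splitting the given direct product into two infinite halves. Write $\Lambda=\Lambda_1\sqcup\Lambda_2$ with both $\Lambda_j$ infinite, and set $H^{(j)}:=\langle H_\lambda : \lambda\in\Lambda_j\rangle$ for $j=1,2$. Then $H=H^{(1)}H^{(2)}$, $H^{(1)}\cap H^{(2)}=K$ and $[H^{(1)},H^{(2)}]\le K$; moreover each $H^{(j)}/K$ is itself the direct product of an infinite family of non-trivial subgroups, so Lemma~\ref{1} is applicable to each half.

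For $(ii)$ I would apply Lemma~\ref{1} to the section $H^{(j)}/K$ ($j=1,2$) with the \emph{trivial} choice $L:=K$: the hypotheses $L\cap H^{(j)}=K$ and $\langle H_\lambda,L\rangle=H_\lambda L$ hold vacuously since $K\le H_\lambda$. The lemma then produces a subgroup $H_j^*\trianglelefteq H^{(j)}$ with $K\le H_j^*$ such that $LH_j^*=H_j^*$ is a $\chi$-subgroup of $G$. Dedekind's modular law gives $H_1^*\cap H_2^*\le H^{(1)}\cap H^{(2)}=K$, so $H_1^*\cap H_2^*=K$, and the intersection hypothesis on $\chi$ then shows that $K$ is a $\chi$-subgroup.

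For $(i)$ I would apply Lemma~\ref{1} twice: first to the section $H^{(1)}/K$ with $L:=H^{(2)}$, and then to the section $H^{(2)}/K$ with $L:=H^{(1)}$. The required hypotheses follow at once from $H^{(1)}\cap H^{(2)}=K$ and from the fact that the two halves commute modulo $K$. This yields normal subgroups $H_j^*\trianglelefteq H^{(j)}$, both containing $K$, such that $X:=H^{(2)}H_1^*$ and $Y:=H^{(1)}H_2^*$ are $\chi$-subgroups of $G$. The crux is then to check that $Y$ normalises $X$: since $[H^{(1)},H^{(2)}]\le K\le H_j^*$, the subgroup $H^{(1)}$ normalises both $H^{(2)}$ and $H_1^*$ (the latter being normal in $H^{(1)}$ by construction), while $H_2^*\le H^{(2)}$ normalises $H^{(2)}$ and also $H_1^*$. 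Hence $X^Y=X$. The hypothesis of $(i)$ now gives that $\langle X,Y\rangle$ is a $\chi$-subgroup, and since $\langle X,Y\rangle\supseteq H^{(1)}H^{(2)}=H$, one concludes that $H$ itself is $\chi$.

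The main difficulty is the commutator/normaliser chase in $(i)$: one must deduce the \emph{global} relation $X^Y=X$ from the fact that each $H_j^*$ is only known to be normal in the local subgroup $H^{(j)}$, using the cross-commutator bound $[H^{(1)},H^{(2)}]\le K$ to bridge the two halves. Everything else is a routine invocation of Lemma~\ref{1} together with Dedekind's modular law.
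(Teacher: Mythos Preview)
Your proof is correct and follows essentially the same approach as the paper's: split the infinite direct product into two infinite halves, apply Lemma~\ref{1} with $L$ trivial (equivalently $L=K$) for part~$(ii)$ and with $L$ equal to the opposite half for part~$(i)$, then take the intersection or the join. The paper phrases the normaliser step in~$(i)$ slightly differently, observing directly that both $H^{(2)}H_1^{*}$ and $H^{(1)}H_2^{*}$ are normal in $H$ (since each $H^{(j)}\trianglelefteq H$ and $H_j^{*}\trianglelefteq H^{(j)}$ together with $[H^{(1)},H^{(2)}]\le K$), but your commutator chase amounts to the same thing. One cosmetic point: in your last line you write $\langle X,Y\rangle\supseteq H$; since also $X,Y\le H$, you actually have $\langle X,Y\rangle=H$, which is what is needed to conclude that $H$ is~$\chi$.
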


\begin{proof}
Write $H/K=H_{1}/K\times H_{2}/K$ where both $H_{1}/K$ and $H_{2}/K$ are the
direct product of an infinite collection of non-trivial subgroups.
Application of Lemma \ref{1} yields that there exist an $H_1$-invariant
subgroup $H_{1}^{\ast }$ in $[H_{1}/K]$ and an $H_2$-invariant subgroup $%
H_{2}^{\ast }$ in $[H_{2}/K]$ such that both $H_{1}^{\ast }H_{2}$ and $%
H_{1}H_{2}^{\ast }$ are $\chi $-subgroups of $G$. Again Lemma \ref{1} (with $%
L=\{1\}$) gives that here exist a subgroup $K_{1}^{\ast }$ in $[H_{1}/K]$
and a subgroup $K_{2}^{\ast }$ in $[H_{2}/K]$ such that both $K_{1}^{\ast }$
and $K_{2}^{\ast }$ are $\chi $-subgroups of $G$. Clearly $H_{1}^{\ast
}H_{2} $ and $H_{1}H_{2}^{\ast }$ are both normal subgroups of $H$, so that $%
(H_{1}H_{2}^{\ast })^{(H_{1}^{\ast }H_{2})}=H_{1}H_{2}^{\ast }$. Since $%
H=\left\langle H_{1}H_{2}^{\ast },H_{1}^{\ast }H_{2}\right\rangle $ and $%
K_{1}^{\ast }\cap K_{2}^{\ast }=K$, the lemma is proved.
\end{proof}

\medskip

\begin{lemma}
\label{catena-} Let $G$ be a group with RCC on non-$\chi$ subgroups, where $\chi$ is such that the intersection $X\cap Y$ is a $%
\chi $-subgroup whenever $X$ and $Y$ are $\chi $-subgroups. 
Let  $L$ be any
subgroup of $G$. If there exists a subgroup $H$ of $G$ which is the
direct product of an infinite collection of $L$-invariant  non-trivial 
subgroups and such that $L\cap H=\{1\}$, then $L$ is a $\chi $-subgroup of $G$.
\end{lemma}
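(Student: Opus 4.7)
The plan is to mimic the split-and-intersect strategy already used in the proof of Lemma \ref{2}(ii), applying Lemma \ref{1} twice with $K=\{1\}$ and then using that $\chi$ is closed under finite intersections to pin down $L$ itself.

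First I would partition the (infinite) index set $\Lambda$ into two disjoint infinite subsets $\Lambda_1$ and $\Lambda_2$, and set $H_j=\mathrm{Dr}_{\lambda\in\Lambda_j}H_\lambda$ for $j=1,2$, so that $H=H_1\times H_2$. Each $H_j$ is again the direct product of an infinite collection of $L$-invariant non-trivial subgroups; in particular $H_j$ itself is $L$-invariant, and $L\cap H_j\leq L\cap H=\{1\}$.

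Next I would apply Lemma \ref{1} to each $H_j$ with $K=\{1\}$. The hypotheses are satisfied: $L\cap H_j\leq\{1\}$, and $\langle H_\lambda,L\rangle=H_\lambda L$ for every $\lambda\in\Lambda_j$ because $H_\lambda$ is $L$-invariant. Since by hypothesis there is no $\mathbb{R}$-chain of non-$\chi$ subgroups of $G$ (let alone inside the interval $[H_j/\{1\}]$), the lemma produces subgroups $H_j^{*}\leq H_j$ such that $LH_j^{*}=H_j^{*}L$ is a $\chi$-subgroup of $G$.

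Finally, I would verify that $LH_1^{*}\cap LH_2^{*}=L$. Indeed, for any $x$ in the intersection, write $x=l_1h_1=l_2h_2$ with $l_j\in L$ and $h_j\in H_j^{*}$; then $l_2^{-1}l_1=h_2h_1^{-1}\in L\cap H=\{1\}$, so $h_1=h_2\in H_1\cap H_2=\{1\}$ (using $H=H_1\times H_2$), and hence $x=l_1\in L$. Since $\chi$ is closed under finite intersections, it follows that $L$ is a $\chi$-subgroup of $G$. There is no real obstacle here: the technical work was already carried out in Lemma \ref{1}, and the present statement is essentially the variant of Lemma \ref{2}(ii) in which $K=\{1\}$ and the conclusion is transferred from $K$ to the complementary subgroup $L$.
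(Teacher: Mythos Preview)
Your proposal is correct and follows essentially the same route as the paper's proof: split $H=H_1\times H_2$ into two infinite direct products, apply Lemma~\ref{1} to each factor to obtain $\chi$-subgroups $H_1^{*}L$ and $H_2^{*}L$, and intersect. The only difference is that you spell out the verification $H_1^{*}L\cap H_2^{*}L=L$ explicitly, whereas the paper simply asserts it.
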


\begin{proof}
Write $H=H_{1}\times H_{2}$ where both $H_{1}$ and $H_{2}$ are the direct
product of an infinite collection of non-trivial subgroups. Application of
Lemma \ref{1} yields that there exist subgroups $H_{1}^{\ast }\leq H_{1}$
and $H_{2}^{\ast }\leq H_{2}$ such that both $H_{1}^{\ast }L$ and $%
H_{2}^{\ast }L$ are $\chi $-subgroups of $G$. Therefore $L=H_{1}^{\ast
}L\cap H_{2}^{\ast }L$ is likewise a $\chi $-subgroup of $G$.
\end{proof}

\medskip
Finally, we state as a lemma a property of abelian groups which is probabily well-known and that we will use in our aurgument without further mention. For a proof of such a property see, for instance, Lemma 3.2 of \cite{GKR}.

\begin{lemma}Any abelian group which is not minimax has an homomorphic image which is the direct product of infinitely many non-trivial subgroups.
\end{lemma}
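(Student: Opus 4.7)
The plan is to split into cases according to why $A$ fails to be minimax: either the torsion-free rank $r_0(A)$ is infinite, or the torsion subgroup $T:=t(A)$ fails to be Chernikov. In the first case, I would pass to $A/T$ to assume $A$ is torsion-free of infinite rank, pick a free subgroup $F=\bigoplus_{i\in\mathbb{N}}\mathbb{Z} x_i$ of countable rank, fix distinct primes $p_1,p_2,\dots$, and define $\psi\colon F\to\bigoplus_{i}\mathbb{Z}(p_i^\infty)$ by sending $x_i$ to an element of order $p_i$ in the $i$-th Pr\"ufer factor. The target is divisible, hence injective in the category of abelian groups, so $\psi$ extends to a homomorphism $\tilde\psi\colon A\to\bigoplus_{i}\mathbb{Z}(p_i^\infty)$. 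The image of $\tilde\psi$ is a torsion subgroup whose $p_i$-primary component is nontrivial for every $i$, and since any torsion abelian group decomposes as the direct sum of its primary components, $\operatorname{Im}\tilde\psi$ is itself a direct sum of infinitely many nontrivial subgroups --- the required homomorphic image.

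In the second case, I would reduce to the torsion situation by choosing a finitely generated $F\le A$ whose image in $A/T$ is a maximal independent set, so that $A/F$ is torsion; since $F\cap T$ is finite and the class of Chernikov groups is closed under finite extensions, $A/F$ inherits the property of not being Chernikov. Writing $A/F=\bigoplus_p(A/F)_p$, either infinitely many $(A/F)_p$ are nontrivial --- in which case $A/F$ is already the required direct sum --- or some $(A/F)_p$ has infinite rank. In the latter situation, splitting the maximal divisible subgroup off $(A/F)_p$ reduces either to a divisible $p$-group of infinite rank, which is $\mathbb{Z}(p^\infty)^{(\mathbb{N})}$ and is a direct summand of $A/F$ by divisibility, or to a reduced $p$-group $R$ of infinite rank; in this reduced subcase the theory of basic subgroups yields $R/pR\cong B/pB$ for any basic subgroup $B$ of $R$, which is an infinite-dimensional $\mathbb{F}_p$-vector space, i.e.\ an infinite direct sum of copies of $\mathbb{F}_p$, reached as a quotient of $A/F$ via projection onto $R$ followed by reduction modulo $p$.

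The hard part is the torsion-free case: an abelian group of infinite torsion-free rank need not decompose nontrivially as a direct sum (for instance $\mathbb{Z}_p$ is indecomposable as an abelian group), so one cannot produce the required quotient simply by splitting off a direct summand. The crucial idea is to map a free subgroup into a divisible target built from countably many Pr\"ufer groups at distinct primes and to invoke injectivity to extend the map to all of $A$; the distinct primes then separate the components of the image and force the infinite direct sum structure.
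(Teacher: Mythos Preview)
Your case division is not exhaustive. The dichotomy ``either $r_0(A)$ is infinite, or the torsion subgroup $T$ fails to be Chernikov'' misses abelian groups such as $A=\mathbb{Q}$ (or any torsion-free group of finite rank that is not minimax): here $r_0(A)=1$ and $T=0$ is trivially Chernikov, yet $A$ is not minimax. Accordingly, in your second case the sentence ``since $F\cap T$ is finite and the class of Chernikov groups is closed under finite extensions, $A/F$ inherits the property of not being Chernikov'' genuinely uses the hypothesis that $T$ is not Chernikov, and so does not cover these examples.

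The repair is short. Take the dichotomy to be ``$r_0(A)$ infinite'' versus ``$r_0(A)$ finite''. In the latter case choose, as you do, a finitely generated $F\le A$ with $A/F$ torsion; then $A/F$ cannot be Chernikov, for otherwise $A$ would be (finitely generated)-by-Chernikov and hence minimax, contrary to hypothesis. From that point your analysis of $A/F$ via primary decomposition, the divisible/reduced splitting of a primary component of infinite rank, and the basic-subgroup identity $R/pR\cong B/pB$ (which indeed forces $R/pR$ to be infinite when $R$ is reduced of infinite rank, since a reduced $p$-group with finite basic subgroup is finite) goes through unchanged, and your treatment of the torsion-free case via extension to a divisible target is correct. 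For comparison, the paper does not give its own proof of this lemma but simply refers to Lemma~3.2 of \cite{GKR}.
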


\medskip
\section{Real chain condition on non-normal subgroups}

Let $G$ be a group. The  {\it $FC$-centre} of $G$ is the subgroup consisting of all elements having finitely many conjugates, and $G$ is said to be an {\it $FC$-group} if it coincides with its $FC$-centre.  The class of all $FC$-groups have been widely studied. It tourns out, in particular, that if $G$ is an $FC$-group then $G/Z(G)$ and $G'$ are locally finite (see \cite{T}, Theorem 1.4 and Theorem 1.6), moreover $G/Z(G)$ is residually finite (see \cite{T}, Theorem 1.9).

A subgroup $H$ of $G$ is called {\it nearly normal} if the index $|H^G:H|$ is finite. Any group whose (cyclic) subgroups are nearly normal is an $FC$-group (see \cite{T}, Lemma 7.12), moreover {\it all (abelian) subgroups of a group are nearly normal if and only if the group is finite-by-abelian} (see \cite{T}, Theorem 7.17).

A subgroup $H$ of $G$ is called {\it almost normal} if it has finitely many conjugates in $G$, i.e. when the index $|G:N_G(H)|$ is finite. Clearly, if all (cyclic) subgroups of $G$ are almost normal then $G$ is an $FC$-group; moreover, {\it all (abelian) subgroups of a group are almost normal if and only if the group is central-by-finite} (see \cite{T}, Theorem 7.20). Notice that any central-by-finite group is finite-by-abelian (see \cite{T}, Theorem 1.2).

\medskip

\begin{lemma}
\label{nnFC}Let $G$ be a group with RCC on non-(almost normal) (resp. non-(nearly normal) subgroups. If $G$ is an $FC$-group, then $G/Z(G)$ is finite (resp. $G^{\prime }$ is finite).
\end{lemma}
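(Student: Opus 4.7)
The plan is to argue by contradiction in both cases, constructing an explicit $\mathbb{R}$-chain of non-$\chi$ subgroups. I describe the almost-normal case in detail; the nearly-normal case follows the same scheme.

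Suppose $G$ is an FC-group with RCC on non-(almost normal) subgroups but $G/Z(G)$ is infinite. Since $G$ is FC, $\bar G:=G/Z(G)$ is locally finite and residually finite. Being infinite and locally finite, by the Hall--Kulatilaka--Kargapolov theorem $\bar G$ contains an infinite abelian subgroup, and residual finiteness forbids non-trivial Pr\"ufer subgroups, so this abelian subgroup is a direct sum of finite cyclic prime-power groups. Re-indexing the countable collection of summands by $\mathbb{Q}$, I obtain
\[
\bar A \;=\; \bigoplus_{q \in \mathbb{Q}} \langle \bar a_q\rangle \;\leq\; \bar G,
\]
with each $\langle \bar a_q\rangle$ non-trivial and finite cyclic. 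Lift each $\bar a_q$ to an element $a_q \in G\setminus Z(G)$ and, for every $r \in \mathbb{R}$, set
\[
K_r \;=\; \langle a_q \;:\; q \in \mathbb{Q}, \; q < r\rangle .
\]
Since $K_r Z(G)/Z(G) = \bigoplus_{q<r}\langle \bar a_q\rangle$ grows strictly with $r$, the family $(K_r)_{r \in \mathbb{R}}$ is a strict $\mathbb{R}$-chain in the subgroup lattice of $G$.

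The crucial step will be to show that each $K_r$ is not almost normal. For each $q$, because $a_q \notin Z(G)$ there exists $b_q \in G$ with $[a_q, b_q] \neq 1$. A Dedekind-type manipulation, modelled on that of Lemma \ref{1} and exploiting the direct-sum independence of the $\bar a_q$ together with the finiteness of the normal closure of any single element in an FC-group (Dietzmann's lemma), should permit the choice of $b_q$ so that $a_q^{b_q} = a_q[a_q,b_q]$ lies outside $K_r$; hence $K_r^{b_q} \neq K_r$. Ranging $q$ over the infinite set $\{q \in \mathbb{Q} : q<r\}$ then yields infinitely many distinct conjugates of $K_r$ in $G$, so $|G:N_G(K_r)| = \infty$ and $K_r$ is not almost normal. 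The $\mathbb{R}$-chain of non-almost-normal subgroups contradicts RCC, forcing $G/Z(G)$ to be finite. The main obstacle is precisely this non-absorption verification: one must rule out that the commutators $[a_q,b_q]$ are swallowed by $K_r$, and that is where the direct-sum independence of the $\bar a_q$ modulo $Z(G)$ plays the essential role.

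The nearly-normal variant follows the same blueprint with $N_G(H)$ replaced by $H^G$. Since in any FC-group $G'$ is locally finite (and itself FC, hence residually finite), if $G'$ is infinite the same extraction produces $\bar A = \bigoplus_{q \in \mathbb{Q}} \langle a_q\rangle$ inside $G'$ with each summand finite non-trivial cyclic and each $\langle a_q\rangle$ non-normal in $G$. Defining $K_r$ as above, one verifies $|K_r^G:K_r| = \infty$ by noting that the finite normal closures $\langle a_q\rangle^G$ contribute independently modulo $K_r$, producing an infinite quotient. The resulting $\mathbb{R}$-chain of non-(nearly normal) subgroups contradicts RCC and forces $G'$ to be finite.
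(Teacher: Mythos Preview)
Your strategy is the reverse of the paper's and, as you half-suspect in your ``main obstacle'' remark, it cannot be completed: the subgroups $K_r$ you build are in fact almost normal (resp.\ nearly normal), so no $\mathbb{R}$-chain of non-$\chi$ subgroups arises. Indeed, for each real $r$ the quotient $K_r/(K_r\cap Z(G))\cong K_rZ(G)/Z(G)=\bigoplus_{q<r}\langle\bar a_q\rangle$ is a direct product of infinitely many non-trivial subgroups, and Lemma~\ref{2}(i) (which applies to $\chi\in\{an,nn\}$) then forces $K_r$ to be a $\chi$-subgroup of $G$. The ``non-absorption'' you hope to verify is therefore impossible in general: nothing prevents the commutators $[a_q,b_q]$ from lying in $K_r$, and even if some of them escaped, RCC on non-$\chi$ subgroups is precisely a hypothesis that rules out long chains of non-$\chi$ subgroups, not one that helps produce them. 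In the nearly-normal variant the situation is even clearer, since your $K_r\le A\le G'$ is already literally an infinite direct sum of cyclics, so Lemma~\ref{2}(i) applies immediately.

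The paper's argument goes in the opposite direction: rather than exhibiting non-$\chi$ subgroups, it shows that \emph{every} abelian subgroup $A\le G$ is $\chi$. After a reduction to the periodic case, if $A$ is Chernikov then residual finiteness of $G/Z(G)$ forces the divisible part of $A$ into $Z(G)$, whence $A/A_G$ is finite and $A$ is both almost and nearly normal; if $A$ is not Chernikov then it has a homomorphic image that is an infinite direct product, and Lemma~\ref{2}(i) gives $A$ the property $\chi$ directly. Once all abelian subgroups are almost normal (resp.\ nearly normal), the classical characterisations (Neumann's theorems, \cite{T} Theorems 7.17 and 7.20) yield $|G/Z(G)|<\infty$ (resp.\ $|G'|<\infty$). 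The moral is that under RCC on non-$\chi$ subgroups one should use the hypothesis to certify that candidate subgroups \emph{are} $\chi$, not to locate ones that are not.
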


\begin{proof}
The factor $G/Z(G)$ is periodic and so we may consider a torsion-free subgroup $F$ of $Z(G)$ such that $G/F$ is periodic. Since $G'$ is periodic, $%
Z(G/F)=Z(G)/F$ and so, replacying $G$ by $G/F$, it can be supposed that $G$
is periodic. Let $A$ be any abelian subgroup of $G$. Assume first that $A$
is a Chernikov group. Since $G/Z(G)$ is residually finite, the finite
residual of $A$ is contained in $Z(G)$ and so $A/A_{G}$ is finite. Then $A^{G}/A_{G}$ is likewise finite (see \cite{T}, Lemma 1.3) and hence $A$ is
both nearly normal and almost normal in this case. Suppose now that $A$ is
not a Chernikov group, hence $A$ has an homomorphic image which is the
direct product of infinitely many non-trivial subgroups 
and hence $A$ is almost normal in $G$ (resp. nearly normal) by Lemma \ref{2}. Therefore all abelian subgroups of $G$ are almost normal (resp. nearly
normal) and so lemma follows by above quoted results.
\end{proof}

\bigskip Next three lemmas allows us to assume that abelian subgroups have  finite total rank; where the {\it total rank} of an abelian group is the sum of all $p$-ranks for $p=0$ or $p$ prime. Recall also that a well-know result of Kulikov states that {\it any subgroup of a direct product of cyclic subgroups is likewise a direct product of cyclic subgroups} (see \cite{F73}, Theorem 3.5.7), in what follows we make use of this result also without further reference.

\medskip

\begin{lemma}
\label{xxx} Let $G$ be a group with RCC on non-(almost normal) (resp. non-(nearly normal) subgroups,
and let $A$ be a subgroup which is the direct product of infinitely many
non-trivial cyclic subgroups. Then all subgroups of $A$ are almost normal
(resp. nearly normal) subgroups of $G$.
\end{lemma}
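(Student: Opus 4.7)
The plan is to take an arbitrary subgroup $B$ of $A$ and deduce that $B$ is almost normal (resp.\ nearly normal) by a dichotomy on whether $B$ is finitely generated, applying in each case one of the two preceding lemmas.

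First, by Kulikov's theorem (recalled just before the statement of this lemma), $B$ is itself a direct product of cyclic subgroups. Suppose that the collection of non-trivial factors in such a decomposition is infinite: then the section $B/\{1\}$ of $G$ is the direct product of infinitely many non-trivial subgroups, and Lemma~\ref{2}(i) applied to this section immediately yields that $B$ is almost normal (resp.\ nearly normal) in $G$. The join-closure hypothesis required by item~(i) of that lemma is satisfied by both $an$ and $nn$, as was already pointed out in the commentary preceding its statement.

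Suppose instead that $B$ is finitely generated. Writing $A=\underset{i\in I}{\mathrm{Dr}}\langle a_i\rangle$, a finite system of generators of $B$ involves only finitely many of the direct factors $\langle a_i\rangle$, so the set $S$ of indices actually occurring in the supports of the generators is finite. Then $B\leq A_0$, where $A=A_0\times A'$ with $A_0=\underset{i\in S}{\mathrm{Dr}}\langle a_i\rangle$ and $A'=\underset{i\in I\setminus S}{\mathrm{Dr}}\langle a_i\rangle$. Since $I$ is infinite and $S$ is finite, $A'$ is the direct product of infinitely many non-trivial cyclic subgroups; each of them is $B$-invariant, automatically, because $A$ is abelian and $B\leq A$. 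Finally $B\cap A'=\{1\}$, so Lemma~\ref{catena-} applies with $L=B$ and $H=A'$ and yields that $B$ is almost normal (resp.\ nearly normal), the intersection-closure hypothesis of that lemma being again satisfied by $an$ and $nn$.

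There is no substantial obstacle in this argument: the only thing to watch is the clean split of cases and the verification that the closure hypotheses of Lemmas~\ref{2}(i) and~\ref{catena-} are satisfied for $\chi=an$ and $\chi=nn$, which is standard and was explicitly recorded in the remarks preceding Lemma~\ref{2}.
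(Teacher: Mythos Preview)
Your proof is correct and follows essentially the same route as the paper: the same dichotomy on whether the subgroup is finitely generated, using Lemma~\ref{2} for the infinite case and Lemma~\ref{catena-} for the finitely generated one. Your treatment of the finitely generated case is in fact slightly more direct than the paper's, which first applies Lemma~\ref{catena-} only to cyclic direct factors of $A$, deduces that $A$ lies in the $FC$-centre of $G$, and then concludes that every finitely generated subgroup of $A$ is almost normal (and nearly normal); you instead apply Lemma~\ref{catena-} straight to $B$ via the complementary factor $A'$, which avoids the detour through the $FC$-centre.
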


\begin{proof}
Let $X$ be any cyclic direct factor of $A$. Clearly we may write $A=X\times A_1$
where $A_1$ is not finitely generated, and so application of Lemma \ref%
{catena-} gives that $X$ is almost normal (resp. nearly normal) in $G$.
Therefore $A$ is contained in the $FC$-centre of $G$ and all finitely
generated subgroup of $A$ are almost normal (and nearly normal) in $G$. On
the other hand, if $Y$ is any subgroup of $A$ which is not finitely
generated, then $Y$ is likewise a direct product of cyclic subgroups, and hence $Y$ is almost normal (resp. nearly
normal) in $G$ by Lemma \ref{2}. Therefore all subgroups of $A$ are almost
normal (resp. nearly normal) in $G$.
\end{proof}

\medskip

\begin{lemma}
\label{anor} Let $G$ be a group and let $A$ be a normal subgroup of $G$ which is the direct product of infinitely many non-trivial cyclic subgroups. If all subgroups of $A$ are almost normal in $G$, then $A$ contains a
subgroup which is the direct produt of infinitely many finitely generated $G$-invariant non-trivial subgroups.
\end{lemma}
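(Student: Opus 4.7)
The argument splits according to the rank structure of $A$. Let $T(A) = \bigoplus_p A_p$ be the primary decomposition; each $A_p$ is characteristic in $A$ and therefore $G$-invariant, and each is itself a direct sum of cyclic $p$-groups. Since $A$ is a direct sum of infinitely many non-trivial cyclic subgroups, exactly one of three cases occurs: (a) some single $A_p$ has infinite Pr\"ufer rank (i.e.\ is itself an infinite direct sum of cyclic $p$-groups); (b) every $A_p$ is finite but infinitely many of them are non-trivial; or (c) $T(A)$ is finite and the quotient $A/T(A)$ has infinite $\mathbb{Z}$-rank. Case (b) is immediate: the infinitely many non-trivial $A_p$ are already finite, $G$-invariant, and pairwise disjoint by primary decomposition, so $\bigoplus_p A_p \leq A$ is exactly a subgroup of the kind required.

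In case (c), set $e := |T(A)|$. Then the subgroup $eA$ is characteristic (hence $G$-invariant), torsion-free (since $e$ kills $T(A)$), and of the same $\mathbb{Z}$-rank as $A/T(A)$, which is infinite. Consider the $\mathbb{Q}[G]$-module $V := eA \otimes \mathbb{Q}$, of infinite $\mathbb{Q}$-dimension. Since every cyclic subgroup of $A$ is almost normal, every element of $V$ has finite $G$-orbit, so $V$ is locally finite as a $\mathbb{Q}[G]$-module, and each of its finitely generated submodules factors through a finite quotient of $G$. Maschke's theorem in characteristic zero makes each such finitely generated submodule semisimple, and a module that is a union of semisimple submodules is itself semisimple, so $V = \bigoplus_\alpha V_\alpha$ with each $V_\alpha$ simple and finite-dimensional, and with infinitely many indices $\alpha$. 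Setting $W_\alpha := V_\alpha \cap eA$ produces, for each $\alpha$, a finitely generated (lattice in a finite-dimensional $\mathbb{Q}$-space), $G$-invariant (intersection of $G$-invariants), non-trivial subgroup of $eA \subseteq A$, and $\bigoplus_\alpha W_\alpha$ is a direct sum inside $A$.

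The main obstacle is case (a), where Maschke is unavailable. The plan here is to pass to the socle $V := A_p[p]$, an infinite-dimensional $\mathbb{F}_p$-vector space on which every $\mathbb{F}_p$-subspace has finitely many $G$-conjugates (inherited from the almost-normality of every subgroup of $A_p$). The key intermediate claim is that this strong hypothesis forces the action of $G$ on $V$ to factor through a \emph{finite} quotient $\bar G := G/C_G(V)$: any infinite image of $G$ in $\mathrm{Aut}_{\mathbb{F}_p}(V)$ should, by a diagonalization argument exploiting the infinite dimension of $V$, produce an $\mathbb{F}_p$-subspace with infinite $G$-orbit, contradicting the hypothesis. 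Granted this, $V$ becomes a locally finite module over the finite-dimensional algebra $\mathbb{F}_p[\bar G]$, and structure theory for such modules (socle-series analysis, combined with the fact that there are only finitely many isomorphism classes of simple $\bar G$-modules over $\mathbb{F}_p$) yields infinitely many non-zero finite-dimensional $\bar G$-invariant subspaces of $V$ with direct sum, producing the required subgroup of $A_p$. Establishing the finite-quotient claim is the delicate point, and it genuinely uses the full hypothesis that \emph{every} subgroup (not just every cyclic subgroup) of $A$ is almost normal, which is strictly stronger than the condition used in Lemma~\ref{xxx}.
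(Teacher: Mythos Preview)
Your plan is far more elaborate than what is needed, and its central case~(a) contains a genuine gap. You assert that if every $\mathbb{F}_p$-subspace of the infinite-dimensional socle $V=A_p[p]$ has finite $G$-orbit then $\bar G=G/C_G(V)$ must be finite, but you only gesture at a ``diagonalization argument'' without carrying one out. This is not routine: nothing in your outline rules out an infinite residually-finite $\bar G$ acting locally finitely on $V$, and the examples one writes down to test the claim (e.g.\ $\bigoplus_n C_p$ acting unipotently on $\bigoplus_n\mathbb F_p^2$) fail the hypothesis only because one can \emph{exhibit} a subspace with infinite orbit---which is exactly the construction you owe the reader and have not supplied. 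Granted the finite-quotient claim, the remainder of~(a) does go through (a short Loewy-length argument over the Artinian ring $\mathbb{F}_p[\bar G]$ shows $\mathrm{soc}_{\bar G}(V)$ is infinite, hence an infinite direct sum of finite simples), and your cases~(b) and~(c) are correct. But the gap sits precisely in the main case.

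The paper's proof, by contrast, is a short uniform inductive construction with no case analysis and no representation theory. Having built a finitely generated product $A_1\times\cdots\times A_n$ of $G$-invariant subgroups of $A$, write $A=X\times Y$ with $Y$ finitely generated and $A_1\times\cdots\times A_n\le Y$ (possible since $A$ is a direct sum of cyclics). The complement $X$ is almost normal by hypothesis, so its finitely many $G$-conjugates intersect in a core $X_G$ with $A/X_G$ finitely generated; as $A$ is not finitely generated, $X_G\ne 1$. Pick $1\ne x\in X_G$ and set $A_{n+1}=\langle x\rangle^G$, which is finitely generated because almost-normality of cyclic subgroups places $A$ inside the $FC$-centre of $G$. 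Then $A_{n+1}\le X$ meets $A_1\times\cdots\times A_n\le Y$ trivially, and the induction continues. This sidesteps entirely the structural trichotomy on $A$ and the finite-quotient question that blocks your approach.
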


\begin{proof}
Let $A_1=\{1\}$ and assume that $G$-invariant subgroups $A_1,\dots,A_n$ of $A$ have been constucted in such a way that $\langle
A_1,\dots,A_n\rangle=A_1\times\cdots\times A_n$ is finitely generated. Then
there exists subgroups $X$ and $Y$ such that $Y$ is finitely generated, $%
\langle A_1,\dots,A_n\rangle\leq Y$ and $A=X\times Y$. %
Since $X$ has finitely many conjugates in $G$, the factor $A/X_G$ is
finitely generated; in particular, $X_G$ is not trivial and so we may choose
a non-trivial element $x\in X_G$. Since $A$ is contained in the $FC$-centre
of $G$, the subgroup $A_{n+1}=\langle x\rangle^G$ is finitely generated.
Therefore $\langle A_1,\dots,A_n, A_{n+1}\rangle=A_1\times\cdots\times
A_n\times A_{n+1}$ is a finitely generated subgroup of $A$, and so lemma
follows.
\end{proof}

\medskip

\begin{lemma}
\label{anrango}Let $G$ be a group with RCC on non-(almost normal) subgroups. If $G$ has a subgroup which is the direct product of infinitely many non-trivial cyclic
subgroups, then $G/Z(G)$ is finite.
\end{lemma}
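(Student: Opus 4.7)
By Lemma \ref{xxx}, every subgroup of $A$ is almost normal in $G$, so $A$ is contained in the FC-centre $FC(G)$. My plan is to upgrade this to $G=FC(G)$, at which point Lemma \ref{nnFC} will immediately yield $|G:Z(G)|<\infty$.

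Since $A$ itself is almost normal, the normaliser $N=N_G(A)$ has finite index in $G$ and $A$ is normal in $N$; all subgroups of $A$, being almost normal in $G$, are a fortiori almost normal in $N$. Applying Lemma \ref{anor} inside $N$ therefore produces an infinite family $(B_i)_{i\in\mathbb{N}}$ of finitely generated, non-trivial, $N$-invariant subgroups of $A$ whose internal sum is direct. Now fix an arbitrary $g\in G$; I aim to apply Lemma \ref{catena-} with $L=\langle g\rangle$ in order to obtain that $\langle g\rangle$ is almost normal in $G$. Writing $k=|G:N|$ we have $g^k\in N$, so $g^k$ normalises each $B_i$ and consequently
\[
C_i \;=\; \langle B_i^{\,g^j}\,:\,0\le j<k\rangle
\]
is a finitely generated, non-trivial, $\langle g\rangle$-invariant subgroup of $A^G=A\cdot A^g\cdots A^{g^{k-1}}$. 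A thinning argument extracts an infinite subfamily $(C_{i_n})_{n\in\mathbb{N}}$ whose sum is direct; since $\langle g\rangle$ is cyclic, any non-trivial element of $\langle g\rangle\cap\bigoplus_n C_{i_n}$ is supported in finitely many summands, so discarding those finitely many $C_{i_n}$ yields the subgroup $H$ required by the hypotheses of Lemma \ref{catena-}. The lemma then gives that $\langle g\rangle$ is almost normal in $G$; as $g$ was arbitrary, $G$ is an FC-group and Lemma \ref{nnFC} concludes.

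The main obstacle is the thinning step, because the $C_i$'s leave the abelian $A$ and lie inside $A^G$, a product of finitely many abelian conjugates of $A$ which need not itself be abelian. The saving grace is that each $C_i$ is finitely generated and that the direct decomposition $\bigoplus_i B_i$ is preserved inside each individual $A^{g^j}$, so a finite-support bookkeeping argument carried out across the finitely many conjugates $A^{g^j}$ produces the required infinite independent subfamily among the $C_i$.
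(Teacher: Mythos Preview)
Your overall plan---reduce to showing $G$ is an $FC$-group and then invoke Lemma~\ref{nnFC}---matches the paper's, but the execution diverges at a crucial point and the ``thinning step'' is a genuine gap, not just a bookkeeping detail.

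Here is a concrete configuration in which your $C_i$'s admit \emph{no} independent pair, let alone an infinite independent subfamily. Let $V$ be an elementary abelian $2$-group with basis $\{a_i:i\ge 1\}\cup\{c\}$, let $g$ be the involutory linear automorphism $a_i\mapsto a_i+c$, $c\mapsto c$, and set $G=V\rtimes\langle g\rangle$. Take $A=\langle a_i:i\ge 1\rangle$; then $N=N_G(A)=V$ has index~$2$, every subgroup of $V$ is normal in $V$ (so almost normal in $G$), and $B_i=\langle a_i\rangle$ is a perfectly legitimate output of Lemma~\ref{anor} applied inside $N$. With $k=2$ one gets
\[
C_i=\langle B_i,B_i^{\,g}\rangle=\langle a_i,\,a_i+c\rangle=\langle a_i,c\rangle,
\]
so $C_i\cap C_j=\langle c\rangle\ne 1$ for all $i\ne j$. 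No thinning is possible. (One checks easily that in this $G$ every subgroup is almost normal and $|G:Z(G)|=4$, so the lemma itself is not violated---only your argument.) The ``finite-support bookkeeping across the conjugates $A^{g^j}$'' you invoke cannot work here precisely because the conjugates overlap in a codimension-one subgroup, and the common element $c$ contaminates every $C_i$.

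The paper sidesteps this difficulty entirely. Rather than building $\langle g\rangle$-invariant pieces for each $g$ separately, it first replaces $A$ by $A\cap Z(F)$ (using $|F:Z(F)|<\infty$ from Lemma~\ref{nnFC}) so that $A$ sits inside the \emph{normal} abelian subgroup $Z(F)$ of $G$; it then passes to a subgroup of $Z(F)$ that is itself normal in $G$ and still a direct product of infinitely many non-trivial cyclic groups (via a short case analysis on the torsion part of $Z(F)$). Once such a $G$-normal subgroup is in hand, Lemma~\ref{anor} can be applied with $G$ itself---not merely with $N_G(A)$---yielding an infinite family of non-trivial $G$-invariant subgroups with direct product. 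These are then automatically $\langle g\rangle$-invariant for every $g$ simultaneously, and Lemma~\ref{catena-} applies without any thinning. That passage from an almost-normal $A$ to a genuinely normal replacement is the idea your argument is missing.
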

\bigskip 
\begin{proof}We will prove firstly that $G$ contains a subgroup which is a direct product of infinitely many non-trivial normal subgroups.

Let $A$ be a subgroup of $G$ which is is the direct product of infinitely
many non-trivial cyclic subgroups. By Lemma \ref{catena-} follows easily that every cyclic
subgroup of $A$ is almost normal in $G$, hence $A$ is contained in the $FC$-centre $F$ of $G$. Since $F/Z(F)$ is finite by Lemma \ref{nnFC}, we may
clearly suppose that $A\leq Z(F)$. Let $T$ be the subgroup consisting of all
elements of finite order of $Z(F)$, and assume first that $T$ is not a
Chernikov group. Since $T$ is the direct product of its primary components,
which are normal subgroups of $G$, in order to prove our claim it can be
assumed that $\pi(T)$ is finite. Then there exists a prime $p$ such that the
Sylow $p$-subgroup $P$ of $T$ does not satisfy the minimal condition, so
that the socle of $P$ is an infinite abelian normal subgroup of $G$ of prime
exponent and hence application of Lemma \ref{xxx} and Lemma \ref{anor} give
us the required subgroup.
Assume now that $T$ is a Chernikov group, so that $Z(F)$ has infinite
torsion-free rank. Let $U$ be a free subgroup of $Z(F)$ such that $Z(F)/U$
is periodic; in particular, $U$ has infinite rank. Then $U$ is almost normal
in $G$ by Lemma \ref{2}, so that also $Z(F)/U_G$ is periodic. Thus $%
U_G\simeq U$ is a free abelian normal subgroup of infinite rank of $G$ and
again application of Lemma \ref{xxx} and Lemma \ref{anor} prove that $G$
contains the claimed subgroup.

Therefore $G$ contains a subgroups which is a direct product of infinitely many
non-trivial normal subgroups. Then it follows from Lemma \ref{catena-} that all cyclic subgroups are almost normal, so that $G$ is an $FC$-group and application of Lemma \ref{nnFC} concludes the proof.
\end{proof}

\bigskip

\begin{lemma}
\label{anfine}Let $G$ be a radical-by-finite group with RCC on non-(almost normal) subgroups. Then each non-minimax subgroup of $G$ is almost normal.
\end{lemma}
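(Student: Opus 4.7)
My plan is to prove the stronger statement that the hypotheses force $G/Z(G)$ to be finite; once this is in hand, conjugation of any subgroup depends only on cosets of $Z(G)$, so every subgroup of $G$---and in particular the given non-minimax $H$---has at most $|G/Z(G)|$ distinct conjugates and is therefore almost normal. The natural tool to reach $|G/Z(G)|<\infty$ is Lemma \ref{anrango}, which gives this conclusion as soon as $G$ contains a subgroup that is the direct product of infinitely many non-trivial cyclic subgroups. The whole problem therefore reduces to producing such a subgroup inside $G$.

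First I would observe that the class of minimax groups is closed under taking subgroups, so the existence of a non-minimax $H\leq G$ forces $G$ itself to be non-minimax. Being radical-by-finite and non-minimax, $G$ must contain an abelian non-minimax subgroup $A$, by the classical theorem of Robinson (\cite{R72}, Part 2, Theorem 10.35) used in its contrapositive form, exactly as in the proof of Theorem \ref{TheoremA}. In the typical situation where $A$ has infinite torsion-free rank or infinite $p$-rank for some prime $p$, one extracts the desired direct product at once from $A$ itself---a free abelian subgroup of infinite rank, or the socle of an infinite-rank primary component---and Lemma \ref{anrango} then closes the argument.

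The hard part will be the borderline case in which $A$ is abelian non-minimax of finite Pr\"ufer rank---for example $A\cong \mathbb{Z}_{(p)}$---so that no infinite independent family of cyclic subgroups can be selected from $A$ alone. Here I would bring the ambient action of $G$ into play: considering the $G$-conjugates of $A$ and using the RCC hypothesis on non-(almost normal) subgroups together with Lemma \ref{1}, Lemma \ref{2} and Lemma \ref{catena-}, the aim would be to combine finitely or countably many conjugates of $A$ into an abelian subgroup of infinite rank inside $G$, thus producing the required direct product of cyclic subgroups. Once this is achieved, Lemma \ref{anrango} applies, $|G/Z(G)|$ is finite, and the conclusion that $H$ is almost normal follows as already sketched.
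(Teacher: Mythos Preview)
Your plan rests on proving a strictly stronger statement, namely that a non-minimax radical-by-finite group with RCC on non-(almost normal) subgroups must have $G/Z(G)$ finite. That statement is false. The paper itself flags this: in the introduction it notes that for $\chi=an$ (and $\chi=nn$) the weak chain conditions are \emph{not} equivalent to ``minimax or all subgroups have $\chi$'', and just before Theorem~\ref{teoan} it refers to \cite{CK} for the description of the groups satisfying these conditions ``in the case of groups which are neither minimax nor central-by-finite''. Since ``all subgroups almost normal'' is exactly ``central-by-finite'', there really are non-minimax groups in this class with $G/Z(G)$ infinite. This is also why Theorem~\ref{teoan} has no clause (v) analogous to the one in Theorems~\ref{teon}, \ref{teom}, \ref{teoqn}, \ref{teosn}. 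So the overarching strategy cannot succeed, and the vague ``combine conjugates of $A$'' step in your hard case---which has no obvious mechanism for producing an \emph{abelian} subgroup of infinite rank---is not a local difficulty but a symptom of aiming at the wrong target. (Incidentally, your model example $\mathbb{Z}_{(p)}$ is minimax: $\mathbb{Z}\le\mathbb{Z}_{(p)}$ with quotient $\mathbb{Z}(p^\infty)$; take $\mathbb{Q}$ instead.)

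What the paper actually does is more modest and works subgroup by subgroup. Given a non-minimax $H$, pick an abelian non-minimax $A\le H$ and a free abelian $B\le A$ with $A/B$ periodic. If $B$ has infinite rank you do get $G/Z(G)$ finite via Lemma~\ref{anrango}, and you are done. If $B$ is finitely generated, then $A/B$ fails Min, so its socle is an infinite direct product; Lemma~\ref{2} makes $B$ almost normal, so $B_G$ has finite index in $B$ and $A/B_G$ is periodic with infinite socle, whence Lemma~\ref{anrango} applied to $G/B_G$ gives only that $G/B_G$ is central-by-finite. From this one deduces that $G'$ is polycyclic-by-finite. Now the key point: for the \emph{given} $H$, the abelian group $H/H'$ is still non-minimax (as $H'\le G'$ is polycyclic-by-finite), so it has a quotient that is an infinite direct product, and Lemma~\ref{2}(i) yields that $H$ itself is almost normal. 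No global statement about $Z(G)$ is needed or obtained.
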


\begin{proof}
Let $H$ be any non-minimax subgroup of $G$, then $H$ contains an abelian
non-minimax subgroup $A$ (see \cite{R72} Part 2, Theorem 10.35). Let $B$ any
free subgroup of $A$ such that $A/B$ is periodic. If $B$ is not finitely
generated, then $G/Z(G)$ is finite by Lemma\;\ref{anrango} and so $H$ is
almost normal. Thus assume that $B$ is finitely generated, so that $A/B$
does not satisfy the minimal condition and hence its socle is infinite. Thus 
$B$ is almost normal by Lemma \ref{2}, so that also the periodic group $%
A/B_{G}$ has infinite socle and hence $G/B_{G}$ is finite over its
centre by Lemma \ref{anrango}. Since any central-by-finite group is also
finite-by-abelian, it follows that $G^{\prime }$ is polycyclic-by-finite.
Thus the abelian factor $H/H^{\prime }$ is not minimax and so it has an
homomorphic image which is the direct product of infinitely many non-trivial
subgroups; hence $H$ is almost normal in $G$ by Lemma \ref{2}.
\end{proof}

\bigskip

\begin{lemma}
\label{LFSFan} Let $G$ be a locally finite group with RCC on non-(almost normal) subgroups. Then
either $G$ is a Chernikov group or $G/Z(G)$ is finite. In particular, $G$ is
abelian-by-finite.
\end{lemma}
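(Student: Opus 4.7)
My plan is to dispose of the Chernikov case trivially and, in the opposite case, to produce inside $G$ a subgroup to which Lemma \ref{anrango} applies. Assume that $G$ is not Chernikov. Since $G$ is locally finite, Shunkov's theorem (invoked also in the proof of Theorem \ref{TheoremA}) that a locally finite group satisfying the minimal condition on abelian subgroups is Chernikov forces $G$ to contain an abelian subgroup $A$ which is not Chernikov. Being a periodic abelian group without the minimal condition on subgroups, $A$ must contain a direct product of infinitely many non-trivial cyclic subgroups: indeed, either $\pi(A)$ is infinite, in which case one extracts a non-trivial cyclic $p$-subgroup of $A$ for each prime $p\in\pi(A)$, or some primary component of $A$ has infinite rank and thus contains an infinite elementary abelian subgroup, which is itself a direct sum of cyclic subgroups of prime order. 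At this point Lemma \ref{anrango} applies directly and yields that $G/Z(G)$ is finite, establishing the dichotomy.

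For the ``in particular" clause, both alternatives imply that $G$ is abelian-by-finite: a Chernikov group contains a divisible abelian characteristic subgroup of finite index, and if $G/Z(G)$ is finite then $Z(G)$ itself is an abelian subgroup of finite index in $G$. The only non-elementary ingredient is Shunkov's theorem; the rest of the argument is routine book-keeping on abelian periodic groups together with a direct appeal to Lemma \ref{anrango}, so no serious obstacle arises beyond the initial structural input.
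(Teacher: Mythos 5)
Your proof is correct and follows essentially the same route as the paper: Shunkov's theorem produces a non-Chernikov abelian subgroup, whose socle (or, in your slightly more explicit case analysis, a suitable elementary abelian or multi-primary piece) is a direct product of infinitely many non-trivial cyclic subgroups, and Lemma \ref{anrango} then gives $|G/Z(G)|<\infty$. The only difference is that you spell out the reduction to an infinite direct product of cyclic subgroups and the ``in particular'' clause, both of which the paper leaves implicit.
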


\begin{proof}
Assume that $G$ is not a Chernikov group. Then $G$ contains an abelian
subgroup $A$ which does not satisfy the minimal condition (see \cite{Shunkov}%
); thus the socle of $A$ is a direct product of infinitely many non-trivial
groups of prime order and hence $G/Z(G)$ is finite by Lemma \ref{anrango}.
\end{proof}

\medskip 
It has been proved in \cite{CK} that for a generalised radical group, weak minimal, weak maximal and weak double chain condition on non-(almost normal) subgroups are equivalent, moreover, a description of generalised radical groups statisfying such a condition is also given in the case of groups which are neither minimax nor central-by-finite.
%
%
%
Now we are in position to prove our Framework Statement when $\chi=an$, it add another equivalent condition to the weak chain conditions (and so also to the deviation) on non-(almost normal) subgroups. 

\bigskip

\begin{theorem}
\label{teoan} Let $G$ be a generalised radical group. Then the following are
equivalent:

\begin{itemize}

\item[(i)] $G$ satisfies the weak minimal condition on non-(almost normal)
subgroups;

\item[(ii)] $G$ satisfies the weak maximal condition on non-(almost normal)
subgroups;

\item[(iii)] $G$ satisfies the weak double condition on non-(almost normal)
subgroups.

\item[(iv)]  $G$ satisfies the real chain condition on non-(almost normal) subgroups.

\end{itemize}
\end{theorem}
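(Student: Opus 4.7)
The implications (i) $\Rightarrow$ (iii) and (ii) $\Rightarrow$ (iii) are immediate from the definitions, and (iii) $\Rightarrow$ (iv) is Proposition \ref{WDDC_implies_deviation}. By the result of \cite{CK} recalled above, the first three conditions are already known to be equivalent for generalised radical groups when $\chi=an$, together with an explicit structural description of the groups satisfying them. The only remaining task is therefore to prove (iv) $\Rightarrow$ (iii), and the plan is to show that RCC on non-(almost normal) subgroups forces $G$ into the structural class characterised in \cite{CK}.

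The first step is a reduction to the radical-by-finite case. The hypothesis (iv) is inherited by quotients: an $\mathbb{R}$-chain of non-(almost normal) subgroups of $G/N$ lifts via preimages to an $\mathbb{R}$-chain of non-(almost normal) subgroups of $G$, since for $K\geq N$ one has $N_{G/N}(K/N)=N_G(K)/N$. It also passes to normal subgroups: if $N$ is normal in $G$ and $K\leq N$, then the $N$-conjugates of $K$ are among its $G$-conjugates, so any subgroup non-almost-normal in $N$ is non-almost-normal in $G$. Combined with Lemma \ref{LFSFan}, which ensures that locally finite groups with our hypothesis are soluble-by-finite, an argument modelled on the proof of Proposition \ref{GenRad}, using only closure of the class under quotients and normal subgroups, then yields that $G$ is radical-by-finite.

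With $G$ radical-by-finite, Lemma \ref{anfine} shows that every non-minimax subgroup of $G$ is almost normal, so the poset of non-(almost normal) subgroups consists entirely of minimax subgroups. The proof of Lemma \ref{anfine} itself already extracts further structural information: if $G/Z(G)$ is infinite then $G'$ is polycyclic-by-finite and there is a $G$-invariant free abelian subgroup $B_G$ of finite rank with $G/B_G$ central-by-finite. Exploiting Lemmas \ref{nnFC}, \ref{xxx}, and \ref{anrango} together with the two general Lemmas \ref{1} and \ref{2}, one then matches $G$ against the trichotomy of \cite{CK}: in the central-by-finite case every subgroup is almost normal and (iii) holds vacuously; in the soluble-by-finite minimax case DCC-$\infty$ holds on all subgroups of $G$ by the standard closure of this property under extensions; and in the remaining intermediate regime identified in \cite{CK}, (iii) follows from that paper's explicit analysis applied to $G$.

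The principal obstacle is this matching step: one must verify precisely that the structural data produced by the RCC hypothesis (through the lemmas of this section) is exactly what is required to invoke \cite{CK}'s description, and in particular to place $G$ in its intermediate class when $G$ is neither minimax nor central-by-finite. This is essentially a careful bookkeeping exercise cross-referencing the conclusions of Lemmas \ref{anfine}, \ref{anrango}, and \ref{xxx} with the hypotheses of the known characterisation.
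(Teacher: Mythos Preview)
Your approach is essentially the paper's own: reduce to the radical-by-finite case via Proposition~\ref{GenRad} and Lemma~\ref{LFSFan}, apply Lemma~\ref{anfine} to conclude that every non-minimax subgroup is almost normal, and then invoke \cite{CK}. Two simplifications are worth noting. First, your detour around Proposition~\ref{GenRad} is unnecessary: the class of groups with RCC on non-(almost normal) subgroups is closed under \emph{all} subgroups, not just normal ones, because for any $H\leq G$ and $K\leq H$ one has $|H:N_H(K)|=|H:H\cap N_G(K)|\leq |G:N_G(K)|$, so non-almost-normal in $H$ implies non-almost-normal in $G$; hence Proposition~\ref{GenRad} applies verbatim. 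Second, what you flag as ``the principal obstacle'' is not one: once every non-minimax subgroup is almost normal, the paper simply cites Theorem~12 of \cite{CK}, which takes precisely this hypothesis (in a generalised radical group) and delivers the weak chain conditions directly---no trichotomy matching is required on your side.
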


\begin{proof} As already quoted, conditions $(i)$, $(ii)$ and $(iii)$ are equivalent, and imply $(iv)$ by Proposition \ref{WDDC_implies_deviation}. Conversely, if $G$ satisfies $(iv)$, then $G$ is radical-by-finite by Lemma \ref{LFSFan} and Proposition \ref{GenRad}, so that Lemma \ref{anfine} yields that each non-minimax subgroup of $G$ is
almost normal and hence Theorem 12 of \cite{CK} can be applied to conclude the
proof.
\end{proof}
%
%

%
%
%

%

\medskip
We turn to consider the case when $\chi=nn$. First step is to restrict the total rank of abelian subgroups.

\medskip
\begin{lemma}
\label{nor}\label{nnrango}Let $G$ be a group with RCC on non-(nearly normal) subgroups. If $G$ has a subgroup which is the direct product of infinitely many non-trivial cyclic subgroups, then $G^{\prime}$ is finite.
\end{lemma}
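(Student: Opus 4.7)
The plan is to follow the blueprint of the proof of Lemma \ref{anrango}, aiming first to exhibit inside $G$ a subgroup which is the direct product of infinitely many non-trivial normal subgroups; once such a subgroup is in hand, Lemma \ref{catena-} applied with $L$ an arbitrary cyclic subgroup of $G$ (after discarding the finitely many factors meeting $L$) shows every cyclic subgroup of $G$ is nearly normal, hence $G$ is an $FC$-group by \cite{T}, Lemma 7.12, and Lemma \ref{nnFC} then gives $G'$ finite.

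First I would apply Lemma \ref{catena-} to conclude every cyclic subgroup of $A$ is nearly normal in $G$: given $\langle x\rangle\le A$, the decomposition $A=\langle x\rangle\oplus A_1$ makes $A_1$ a direct product of infinitely many $\langle x\rangle$-invariant (because $A$ is abelian) non-trivial cyclic subgroups with $\langle x\rangle\cap A_1=\{1\}$. Crucially, for a cyclic subgroup, nearly normal forces almost normal: since $\langle x\rangle^G/\langle x\rangle$ is finite, $\langle x\rangle^G$ is finitely generated, so by M.~Hall's theorem it has only finitely many subgroups of each given finite index, hence $\langle x\rangle$ has only finitely many conjugates in $G$. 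Therefore every $x\in A$ has finite conjugacy class in $G$, so $A$ is contained in the $FC$-centre $F$ of $G$; Lemma \ref{nnFC} then gives $F'$ finite. Passing to $G/F'$, which preserves the RCC hypothesis (since $F'$ is a finite normal subgroup), I would assume $F$ is abelian, with the image of $A$ still containing a direct product of infinitely many non-trivial cyclic subgroups.

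Next I would mimic the case analysis of Lemma \ref{anrango}: either the torsion subgroup $T(F)$ is not Chernikov, so that some Sylow $p$-subgroup has infinite socle $S$ (a $G$-invariant elementary abelian subgroup whose subgroups are all nearly normal in $G$ by Lemma \ref{xxx}), or $T(F)$ is Chernikov and the torsion-free rank of $F$ is infinite, so that a free abelian subgroup $U$ of infinite rank is nearly normal by Lemma \ref{2}(i) and its $G$-closure is a $G$-invariant subgroup of infinite rank with the required cyclic decomposition. In either case one obtains a $G$-invariant abelian subgroup $B$ of $F$ which is the direct product of infinitely many non-trivial cyclic subgroups, with all subgroups of $B$ nearly normal in $G$. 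A nearly-normal analog of Lemma \ref{anor} would then yield a direct product of infinitely many finitely generated non-trivial $G$-invariant subgroups inside $B$: the inductive construction writes $B=X\oplus Y$ with $Y$ finitely generated containing the subgroups already built, uses that $|X^G:X|$ is finite to ensure $X^G\cap Y$ (which embeds into $X^G/X$) is finite, and exploits that $A\le F$ means $\langle x\rangle^G$ is finitely generated for each $x\in X$.

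The main obstacle is the nearly-normal version of Lemma \ref{anor}. In the almost-normal setting of Lemma \ref{anor} the complementary summand $X$ has finitely many conjugates, giving a non-trivial core $X_G$ that supplies the required $G$-invariant subgroup contained in $X$. In the nearly-normal setting the summand $X$ (which is not finitely generated) may have infinitely many conjugates — M.~Hall's theorem no longer applies since $X^G$ is not finitely generated — and the core $X_G$ can be trivial. The construction must therefore avoid using $X_G$ directly, instead leveraging the finite-index relation $X\le X^G$: in the torsion-free subcase $X^G\cap Y$ is a finite subgroup of a torsion-free group and hence trivial, so $\langle x\rangle^G$ is automatically disjoint from $Y$, while in the torsion subcase one first strips off torsion and handles the elementary abelian pieces separately, using that each element of the $FC$-centre has conjugates with finite support in a chosen basis of $B$.
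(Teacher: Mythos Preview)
Your overall architecture is sound and several steps are correct: the M.~Hall argument that a nearly normal cyclic subgroup is almost normal is valid and neatly places $A$ in the $FC$-centre; the final deduction via Lemma~\ref{catena-} once a direct product of infinitely many non-trivial normal subgroups is available is also correct. The torsion-free subcase of your analog of Lemma~\ref{anor} actually works more cleanly than you indicate: if $B$ is torsion-free abelian, normal in $G$, with every subgroup nearly normal, and $B=X\times Y$ with $Y$ finitely generated, then $X^G\cap Y$ embeds in the finite group $X^G/X$, hence is trivial, and since $X\le X^G\le X\times Y$ this forces $X^G=X$; thus every direct summand of $B$ is already $G$-invariant.

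The genuine gap is the torsion subcase. Your suggestion to ``strip off torsion and handle the elementary abelian pieces separately, using that each element of the $FC$-centre has conjugates with finite support in a chosen basis of $B$'' is not a proof: finite support of each $\langle e_i\rangle^G$ does not by itself yield an infinite family with pairwise disjoint supports (for instance, supports of the form $\{0,i\}$ are pairwise overlapping), and the extra $G$-invariance structure you would need to exploit is never articulated. Since the infinite-rank summand $X$ is not finitely generated, the M.~Hall trick no longer upgrades nearly normal to almost normal, so $X_G$ may well be trivial and the inductive step of Lemma~\ref{anor} cannot be transplanted directly.

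The paper circumvents this entirely by a different route. It first makes $A$ itself normal (via $A$ nearly normal $\Rightarrow$ $A^G$ abelian-by-finite $\Rightarrow$ replace $A$ by a characteristic abelian subgroup of finite index in $A^G$), and then invokes Casolo's structural results (\cite{Casolo}, Lemma~2.7 and Theorem~2.11): once all subgroups of the normal abelian $A$ are nearly normal in $G$, there is a finite $G$-invariant $D\le A$ such that all subgroups of $A/D$ are \emph{normal} in $G/D$. This immediately supplies the required infinite direct product of $G$-invariant subgroups without any analog of Lemma~\ref{anor}. So either import Casolo's theorem for the torsion case, or supply a complete argument producing infinitely many $G$-invariant direct factors inside an elementary abelian normal subgroup all of whose subgroups are nearly normal; the sketch you give does not yet do this.
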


\begin{proof}
Let $A$ be a subgroup of $G$ which is is the direct product of infinitely
many non-trivial cyclic subgroups, then $A$ is a nearly normal subgroup of $G$ by Lemma~\ref{2}. Since it is well-know that any abelian-by-finite group has a
characteristic abelian subgroup of finite index, it follows that $A^G$
contains a $G$-invariant abelian subgroup $N$ of finite index. Clearly, $%
A\cap N$ has finite index also in $N$ so that $N$ is likewise a direct
product of infinitely many non-trivial cyclic subgroups (see \cite{F73},
Theorem 3.5.7 and Exercise 8 p.99). Replacing $A$ with $N$ it can be
supposed that $A$ is a normal subgroup of $G$. Moreover, all subgroups of $A$ are nearly normal subgroups of $G$ by Lemma \ref{xxx}.

Let $T$ be the subgroup consisting of all elements of finite order of $A$.
Then $T$ is normal in $G$ and $T$ is the direct product of non-trivial
cyclic subgroups by Kulikov's Theorem already quoted; 
moreover, all subgroups of $A/T$ are normal in $G/T$ (see \cite{Casolo}, Lemma~2.7). If $T$ is finite,
it follows easily from Lemma~\ref{catena-} that every cyclic subgroup of $%
G/T $ is nearly normal; hence $G/T$ is an $FC$-group and application of
Lemma \ref{nnFC} yields that $G^{\prime}$ is finite.

Therefore it can be assumed that $A=T$ is infinite. Then $A$ contains a $G$%
-invariant subgroup $D$ which is a finite-by-divisible such that all
subgroups of $A/D$ are normal in $G/D$ (see \cite{Casolo}, Theorem~2.11).
Since $A$ is the direct product of non-trivial cyclic subgroups, also $D$ is
likewise the direct product of non-trivial cyclic subgroups. 
Hence $D$ must be finite and so, as before, it can be
obtained that $G^{\prime}$ is finite.
\end{proof}

\medskip

\begin{lemma}
\label{LFSF} Let $G$ be a locally finite group with RCC on non-(nearly normal) subgroups. Then
either $G$ is a Chernikov group or $G^{\prime}$ is finite. In particular, $G$
is soluble-by-finite.
\end{lemma}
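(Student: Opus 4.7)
The plan is to mimic the structure of the proof of Lemma \ref{LFSFan} for almost normal subgroups, but invoking Lemma \ref{nnrango} in place of Lemma \ref{anrango}. So the strategy is: assume that $G$ is not Chernikov, produce inside $G$ a subgroup that is the direct product of infinitely many non-trivial cyclic subgroups, and then conclude by the nearly normal analogue that $G'$ is finite.

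First I would appeal to Shunkov's theorem: a locally finite group which is not Chernikov contains an abelian subgroup $A$ which does not satisfy the minimal condition. So it suffices to exhibit inside such an $A$ an infinite direct sum of non-trivial cyclic subgroups; this is a purely abelian-group fact for torsion groups, but it is worth spelling out. Writing $A$ as the direct sum of its primary components $A_{p}$, either infinitely many components $A_{p}$ are non-trivial, in which case the socle of $A$ contains a non-trivial element of order $p$ for infinitely many primes $p$ and hence is an infinite direct sum of cyclic subgroups of prime orders; or some single $A_{p}$ fails the minimal condition, and then (since a reduced abelian $p$-group with finite socle is finite, and a divisible abelian $p$-group with finite socle is Chernikov) the socle $A_{p}[p]$ is an infinite elementary abelian $p$-group, hence again an infinite direct sum of cyclic subgroups. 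Either way, $G$ has a subgroup which is the direct product of infinitely many non-trivial cyclic subgroups.

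Applying Lemma \ref{nnrango} to this configuration then gives $G'$ finite, as required. The final ``in particular'' clause is immediate: if $G$ is Chernikov, it is soluble-by-finite by definition; if instead $G'$ is finite, then $G$ is finite-by-abelian and therefore soluble (even metabelian), and a fortiori soluble-by-finite.

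I do not foresee any real obstacle; the only mildly delicate point is the pure abelian-group step turning ``not Chernikov'' into ``infinite elementary direct sum in the socle,'' but this is entirely standard and can be dispatched in one line by the remark in the proof of Lemma \ref{LFSFan}, since the argument there is identical.
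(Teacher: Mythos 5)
Your proposal is correct and takes essentially the same route as the paper's proof: Shunkov's theorem gives an abelian subgroup failing the minimal condition, its socle is then an infinite direct product of non-trivial cyclic groups of prime order, and Lemma~\ref{nnrango} yields that $G'$ is finite. The extra care you take in verifying that the socle is infinite is sound but is treated as immediate in the paper.
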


\begin{proof}
Assume that $G$ is not a Chernikov group. Then $G$ contains an abelian
subgroup $A$ which does not satisfy the minimal condition (see \cite{Shunkov}%
); thus the socle of $A$ is a direct product of infinitely many non-trivial
groups of prime order and hence $G^{\prime}$ is finite by Lemma \ref{nnrango}%
.
\end{proof}

\medskip 
In \cite{DM07}, it has been proved that for a generalised radical group, weak minimal, weak maximal and weak double chain condition on non-(nearly normal) subgroups are equivalent; moreover, with the exception of finite-by-abelian groups, it tourns out that for generalised radical groups, weak chain conditions on non-(nearly normal) subgroups are equivalent to weak chain conditions on non-(almost normal) subgroups. 
In next result we prove that Framework Statement holds when $\chi=nn$ so that, in particular, real chain condition is equivalent to the weak chain conditions (and so also to the deviation) for such a subgroup property.

\medskip

\begin{theorem}
\label{teonn} Let $G$ be a generalised radical group. Then the following are
equivalent:

\begin{itemize}

\item[(i)] $G$ satisfies the weak minimal condition on non-(nearly normal)
subgroups;

\item[(ii)] $G$ satisfies the weak maximal condition on non-(nearly normal)
subgroups;

\item[(iii)] $G$ satisfies the weak double condition on non-(nearly normal)
subgroups.

\item[(iv)]  $G$ satisfies the real chain condition on non-(nearly normal) subgroups.
\end{itemize}
\end{theorem}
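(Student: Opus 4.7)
The plan is to mirror the proof of Theorem \ref{teoan} closely, with each tool replaced by its \emph{nearly normal} analogue; for that purpose the preceding lemmas \ref{nnFC}, \ref{xxx}, \ref{nnrango} and \ref{LFSF} have already been set up. As already noted, for generalised radical groups the equivalence of $(i)$, $(ii)$ and $(iii)$ is the main result of \cite{DM07}, and each of them implies $(iv)$ by Proposition \ref{WDDC_implies_deviation}. Thus the only real work is to prove $(iv)\Rightarrow(iii)$.

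The first step is to reduce to a radical-by-finite group. Assuming $(iv)$, Lemma \ref{LFSF} guarantees that every locally finite subgroup of $G$ is soluble-by-finite; since RCC on non-(nearly normal) subgroups is clearly preserved by subgroups and homomorphic images, Proposition \ref{GenRad} applies and $G$ is radical-by-finite.

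Next I would establish the nearly-normal counterpart of Lemma \ref{anfine}: every non-minimax subgroup $H$ of $G$ is nearly normal. The argument transcribes that of Lemma \ref{anfine}, substituting Lemma \ref{nnrango} for Lemma \ref{anrango}. One picks an abelian non-minimax subgroup $A\le H$ (see \cite{R72}, Part~2, Theorem~10.35) and a free subgroup $B$ of $A$ with $A/B$ periodic. If $B$ has infinite rank, Lemma \ref{nnrango} directly yields that $G'$ is finite and hence $H$ is nearly normal. Otherwise $B$ is finitely generated, so $A/B$ is periodic and does not satisfy the minimal condition, whence its socle is infinite; Lemma \ref{2} makes $B$ nearly normal, so $A/B^G$ still has infinite socle, and a second application of Lemma \ref{nnrango} to $G/B^G$ forces $(G/B^G)'$ to be finite. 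Consequently $G'$ is polycyclic-by-finite, $H/H'$ is a non-minimax abelian group with a quotient that is a direct product of infinitely many non-trivial subgroups, and a final invocation of Lemma \ref{2} gives that $H$ is nearly normal.

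At that point $G$ is a radical-by-finite group in which every non-minimax subgroup is nearly normal, and the structural result of \cite{DM07} for exactly this class of groups delivers the weak double chain condition $(iii)$, closing the circle. The main obstacle I anticipate is step three: locating in \cite{DM07} (or extracting from the arguments therein) the precise statement that plays the role for nearly normal subgroups which Theorem~12 of \cite{CK} plays for almost normal subgroups in the proof of Theorem \ref{teoan}. The computations in step two are essentially routine once the corresponding proof for the almost-normal case is in hand, so the conceptual content really lives in the preparatory lemmas \ref{nnFC}--\ref{LFSF} together with a clean transfer from the almost-normal to the nearly-normal setting.
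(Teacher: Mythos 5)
Your reduction to a radical-by-finite group and your transcription of Lemma~\ref{anfine} into the nearly normal setting are both sound: Lemma~\ref{2} does apply to the property $nn$, Lemma~\ref{nnrango} plays the role of Lemma~\ref{anrango}, and ``$G'$ finite'' correctly replaces ``$G/Z(G)$ finite'' as the reason why every subgroup becomes nearly normal. The problem is the last step, which you yourself flag: you need a theorem asserting that a radical-by-finite group in which \emph{every non-minimax subgroup is nearly normal} satisfies the weak double chain condition on non-(nearly normal) subgroups, i.e.\ an exact analogue of Theorem~12 of \cite{CK}. No such statement is cited, and it is not automatic: knowing that all non-(nearly normal) subgroups are minimax does not by itself exclude a $\mathbb{Z}$-chain of minimax non-(nearly normal) subgroups with infinite indices, so a genuine structural theorem is required here. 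As it stands, the implication $(iv)\Rightarrow(iii)$ is therefore not closed.

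The paper avoids this obstacle by a different reduction and never needs a nearly normal version of Lemma~\ref{anfine}. After the same passage to a radical-by-finite group, it first uses Lemma~\ref{nnFC} to dispose of the case where $G$ is an $FC$-group (there $G'$ is finite and everything is nearly normal). If $G$ is not an $FC$-group, then by Lemma~\ref{nnrango} it has no subgroup that is a direct product of infinitely many non-trivial cyclic subgroups, so all abelian subgroups have finite total rank and, by Carin's theorem \cite{C}, $G$ has finite Pr\"ufer rank. In a group of finite rank every nearly normal subgroup is almost normal (\cite{dGR}, Lemma~3.1), so the poset of non-(almost normal) subgroups sits inside the poset of non-(nearly normal) subgroups and inherits RCC; Theorem~\ref{teoan} then gives the weak double chain condition on non-(almost normal) subgroups, and Theorem~2.12 of \cite{DM07} converts this back into condition $(iii)$ for nearly normal subgroups. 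If you want to salvage your route you would have to prove the missing structural theorem yourself; otherwise the finite-rank reduction is the intended bridge between the two subgroup properties.
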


\begin{proof} Since conditions $(i)$, $(ii)$ and $(iii)$ are equivalent (see \cite{DM07}, Theorem A) and imply $(iv)$ by Proposition \ref{WDDC_implies_deviation}, it is enough to prove that $(iv)$ implies $(iii)$. Let $G$ satisfy RCC on non-(nearly normal) subgroups. Lemma \ref{LFSF} and Proposition \ref{GenRad} give that $G$ is radical-by finite. By Lemma \ref{nnFC} it can be assumed that $G$ is not an $FC$-group so that $G$ does not contain subgroups which are a direct product of infinitely many non-trivial cyclic subgroups by Lemma \ref{nnrango}. Hence all abelian subgroups have finite total rank and so $G$ has a subgroup of finite index
having a finite series in which each factor is abelian of finite total rank
(see \cite{C}). It follows that $G$ has finite (Pr\"ufer) rank and so each
nearly normal subgroup is also almost normal (see \cite{dGR}, Lemma 3.1).
Therefore $G$ has RCC on non-(almost normal) subgroups, so that $G$ satisfy the weak double chain condition on non-(almost normal) subgroups by Theorem\;\ref{teoan} and thus also the weak double chain condition on non-(nearly normal) subgroups (see \cite{DM07}, Theorem 2.12).
\end{proof}

%
%
%

\medskip
Groups in which all subgroups are normal are well-known
since a long time and are the well described Dedekind groups (see \cite{R96}, 5.3.7). Moreover,
Kurdachenko and Goretski\u{\i} \cite{KG} showed that for locally
(soluble-by-finite) groups, the weak minimal condition on non-normal
subgroups is equivalent to the weak maximal condition on non-normal
subgroups, and any locally (soluble-by-finite) group satisfying such a
condition is either a soluble-by-finite minimax group or a Dedekind group (in particular, these result remains true for generalised radical groups by Proposition \ref{GenRad}). We extend this result to condition RCC and improve Corollary\;1 of \cite{GKR1} which handles only the periodic case.

\medskip

\begin{theorem}
\label{teon} Let $G$ be a generalised radical group. Then the following are
equivalent:

\begin{itemize}
\item[(i)] $G$ satisfies the weak minimal condition on non-normal
subgroups;

\item[(ii)] $G$ satisfies the weak maximal condition on non-normal
subgroups;

\item[(iii)] $G$ satisfies the weak double condition on non-normal
subgroups.

\item[(iv)]  $G$ satisfies the real chain condition on non-normal subgroups.

\item[(v)] either $G$ is a soluble-by-finite minimax group or all subgroups of $G$ are normal.
\end{itemize}
\end{theorem}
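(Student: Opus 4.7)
The equivalences $(i)\Leftrightarrow(ii)\Leftrightarrow(iii)\Leftrightarrow(v)$ are contained in the Kurdachenko--Goretski\u{\i} theorem cited just before the statement, extended to generalised radical groups via Proposition \ref{GenRad}. Proposition \ref{WDDC_implies_deviation} gives $(iii)\Rightarrow(iv)$, while $(v)\Rightarrow(iv)$ is immediate: if $G$ is a soluble-by-finite minimax group then Theorem \ref{TheoremA} yields RCC on \emph{all} subgroups, and if $G$ is a Dedekind group then there are no non-normal subgroups at all. Thus only $(iv)\Rightarrow(v)$ requires proof, and this is what the rest of the plan addresses.

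Assume that $G$ is a generalised radical group with RCC on non-normal subgroups. Since every normal subgroup is almost normal, the poset of non-(almost normal) subgroups of $G$ is contained in the poset of non-normal subgroups, so $G$ also has RCC on non-(almost normal) subgroups. Theorem \ref{teoan}, combined with Lemma \ref{LFSFan} and Proposition \ref{GenRad} as in the proof of that theorem, then shows that $G$ is radical-by-finite. If $G$ is already minimax, case $(v)$ holds; otherwise I aim to show that $G$ is a Dedekind group.

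Suppose $G$ is not minimax. Then $G$ has an abelian non-minimax subgroup $A$ (see \cite{R72} Part~2, 10.35). If the torsion-free rank of $A$ is infinite, a maximal $\mathbb{Z}$-linearly independent subset of $A$ spans a free abelian subgroup of infinite rank, which is a direct product of infinitely many infinite cyclic groups. Otherwise the torsion subgroup of $A$ is a non-minimax abelian torsion group, so some primary component has infinite socle, providing a direct product of infinitely many non-trivial cyclic subgroups of prime order. In either case $G$ contains a subgroup which is the direct product of infinitely many non-trivial cyclic subgroups, and Lemma \ref{anrango} -- available thanks to the inherited RCC on non-(almost normal) subgroups -- forces $G/Z(G)$ to be finite. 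In particular $Z(G)$ is abelian and non-minimax, and repeating the extraction inside $Z(G)$ provides a central subgroup $C\leq Z(G)$ that is the direct product of infinitely many non-trivial cyclic subgroups.

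It remains to prove that every subgroup $L$ of $G$ is normal, whence $G$ is Dedekind. Because a union of a directed family of normal subgroups is normal and every subgroup is the directed union of its finitely generated subgroups, it is enough to treat finitely generated $L$. Since $G$ is central-by-finite, so is $L$; hence $L$ is polycyclic and the abelian subgroup $L\cap C$ is finitely generated. Being a finitely generated subgroup of the direct sum $C=\bigoplus_{i}\langle c_{i}\rangle$, the group $L\cap C$ is supported on a finite set $S$ of indices, so $H=\bigoplus_{i\notin S}\langle c_{i}\rangle$ is a direct product of infinitely many non-trivial subgroups meeting $L$ trivially, whose direct factors are all central in $G$ and hence $L$-invariant. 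Lemma \ref{catena-}, applied with $\chi=n$ (the intersection of two normal subgroups is normal), then forces $L$ to be normal in $G$. The main obstacle in the whole argument is the reduction to central-by-finiteness in the non-minimax case; once secured via Theorem \ref{teoan} and Lemma \ref{anrango}, the final application of Lemma \ref{catena-} is essentially routine.
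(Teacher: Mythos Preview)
Your reduction to the central-by-finite case has a genuine gap. The dichotomy ``either the torsion-free rank of $A$ is infinite, or the torsion subgroup of $A$ is non-minimax'' is false: an abelian group can be non-minimax while having finite torsion-free rank and minimax (even trivial) torsion subgroup. The simplest instance is $A=\mathbb{Q}$, which is non-minimax, torsion-free of rank~$1$, and contains no subgroup that is a direct product of infinitely many non-trivial cyclic groups. More generally this occurs whenever $A$ has finite total rank but $A/T(A)$ is not minimax. In such cases Lemma~\ref{anrango} is unavailable, and your argument does not reach the conclusion that $G/Z(G)$ is finite. (There is also a smaller slip: even when the torsion subgroup \emph{is} non-minimax, it need not have a primary component with infinite socle, as $\bigoplus_p\mathbb{Z}/p\mathbb{Z}$ shows; but there the overall socle still provides the needed direct product, so this is easily repaired.)

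The paper's proof deals with exactly this missing case by splitting off a second branch: when $G$ has no subgroup that is an infinite direct product of non-trivial cyclics, all abelian subgroups have finite total rank, and one invokes \cite{C} and Lemma~2.7 of \cite{DM07} to obtain that $G'$ is polycyclic-by-finite. Then for every non-minimax subgroup $H$ the abelian quotient $H/H'$ is non-minimax, so Lemma~\ref{2} applied to a suitable section of $H$ forces $H\lhd G$; this yields the weak minimal condition on non-normal subgroups rather than directly proving $G$ Dedekind. Your argument would be completed by inserting this second branch; the part you wrote handles only the ``infinite total rank'' situation.
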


\begin{proof}
Conditions $(i)$, $(ii)$, $(iii)$ and $(v)$ are equivalent (see \cite
{KG}); moreover $(iii)$ implies $(iv)$ by Proposition \ref{WDDC_implies_deviation}. Hence assume that $G$ satisfy $(iv)$, and prove that $(i)$ holds. Notice that $G$ has RCC on non-(nearly normal) subgroups, so that $G$ is soluble-by-finite by Theorem \ref{teonn} and by Lemma 2.11 of \cite{DM07}.

Assume first that $G$ contains a subgroup $A$ which is the product of
countable many non-trivial cyclic subgroups; then $G^{\prime}$ is finite by
Lemma \ref{nnrango}. Let $X$ be any cyclic subgroup of $G$. Since $%
\left\vert G:C_{G}(X)\right\vert $ is finite, replacing $A$ with a suitable
subgroup, it can be assumed that $[A,X]=A\cap X=\{1\}$ and hence application
of Lemma \ref{catena-} to the subgroup $\langle X,A\rangle=A\times X$ gives
that $X$ is normal in $G$. It follows that $G$ is a Dedekind group in this
case.

Therefore it can be assumed that $G$ does not contain subgroups which are a
direct product of infinitely many non-trivial cyclic subgroups; hence all
abelian subgroups have finite total rank. Therefore the soluble radical of $%
G $ has a finite series in which each factor is abelian of finite total rank
(see \cite{C}); in particular $G$ has finite (Pr\"ufer) rank. Suppose that $G
$ is not a minimax group, so that it contains an abelian non-minimax
subgroup (see \cite{R72} Part 2, Theorem 10.35). Hence either $G^{\prime}$
is finite or $G/Z(G)$ is polycyclic-by-finite (see \cite{DM07}, Lemma 2.7).
Therefore $G^{\prime}$ is polycyclic-by-finite (see \cite{R72} Part 1,
p.115).
Let $H$ be any non-minimax subgroup of $G$. Then the abelian factor $H/H'$ is not minimax and so it has an homomorphic image which is the direct product of infinitely many non-trivial subgroups; thus $H$ is normal in $G$ by Lemma \ref{2}. Therefore all non-minimax subgroups of $G$ are normal and hence $G$ certainly has the weak minimal condition on non-normal subgroups. %
%
%
%
%
%
%
\end{proof}
%
%
%


\section{Real chain condition on non-modular subgroups}

A subgroup $H$ of a
group $G$ is said to be \textit{modular} if it is a modular
element of the lattice 
of all subgroups of $G$,
i.e., if $\left\langle H,X\right\rangle \cap Y=\left\langle
X,H\cap Y\right\rangle $ for all subgroups $X,Y$ of $G$ such that
$X\leq Y$ and $\left\langle H,X\right\rangle \cap Y=\left\langle
H,X\cap Y\right\rangle $ if $H\leq Y$. Lattices in which all
elements are modular are called \textit{modular}. Clearly every
normal subgroup is modular, but modular subgroups need not be
normal; moreover, a projectivity (i.e., an isomorphism between
subgroup lattices) maps any normal subgroup onto a modular
subgroup, thus modularity may be considered as a lattice
generalization of normality. A subgroup $H$ of a group $G$ is said
to be \textit{permutable} (or \textit{quasinormal}) if $HK=KH$ for
every subgroup $K$ of $G$; and the group $G$ is called
\textit{quasihamiltonian} if all its subgroups are permutable. It
is well-known that a subgroup is permutable if and only if it is
modular and ascendant, and that\ any modular subgroup of a locally
nilpotent group is always permutable (see \cite{S94}, Theorem
6.2.10). Groups with modular subgroup lattice, as well as quasihamiltonian groups, have been completely described and we refer to \cite{S94} as a general reference on (modular) subgroup lattice. In particular, recall that every non-periodic group with modular subgroup lattice is
quasihamiltonian, and that a periodic group is quasihamiltonian if
and only if it is a locally nilpotent group in which every
subgroup is modular. Moreover, any group with modular subgroup lattice is metabelian provided it is non-periodic or locally finite.

Recently, in \cite{DM18}, weak chain conditions on non-modular subgroups have been studied. It tourns out that for a generalised radical group, weak minimal and weak maximal condition on non-modular subgroups are both equivalent to the property that all non-minimax subgroups are modular and characterizes groups which either are soluble-by-finite and minimax or have modular subgroup lattice. Here we complete the description by considering RCC.

\medskip
\begin{lemma}
\label{1bis}Let $G$ be a group with RCC on non-modular subgroups having section $H/K$ which is a direct product of infinitely many non-trivial subgroups. If  $x$ is an element of $G$ such that $\left\langle x\right\rangle
\cap H\leq K$,  then there exists a subgroup $L$
of $H$ such that both $L$ and $\left\langle x,L\right\rangle $ are modular
subgroup of $G$.
\end{lemma}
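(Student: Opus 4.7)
My plan is to exploit Lemma~\ref{2}(i), which by the remark preceding that lemma holds also for the property $m$ of being modular, in order to turn every ``initial segment'' of the direct product decomposition into a modular subgroup of $G$; the desired $L$ will then be located by applying RCC to the chain $(\langle x,K_r\rangle)_{r}$.

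In more detail, I would first pass to a countable subcollection of the factors and assume $H/K=\mathrm{Dr}_{i\in\mathbb{Q}}H_i/K$; then for every $r\in\mathbb{R}$ I would set $K_r=\mathrm{Dr}_{i<r}H_i$. Since for each real $r$ there are infinitely many rationals less than $r$, each section $K_r/K$ of $G$ is itself a direct product of infinitely many non-trivial subgroups, and so Lemma~\ref{2}(i), applied with $\chi=m$ to this section, would give that every $K_r$ is modular in $G$. Next I would invoke the second defining identity of modularity, namely $\langle K_r,X\rangle\cap Y=\langle K_r,X\cap Y\rangle$ when $K_r\leq Y$, with $X=\langle x\rangle$ and $Y=H$, and combine it with the hypothesis $\langle x\rangle\cap H\leq K\leq K_r$ to conclude
$$\langle x,K_r\rangle\cap H=\langle K_r,\langle x\rangle\cap H\rangle=K_r.$$

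Since $K_{r_1}<K_{r_2}$ whenever $r_1<r_2$, the above equality prevents $K_{r_2}$ from being contained in $\langle x,K_{r_1}\rangle$, so $\langle x,K_{r_1}\rangle<\langle x,K_{r_2}\rangle$, and $(\langle x,K_r\rangle)_{r\in\mathbb{R}}$ is an $\mathbb{R}$-chain of subgroups of $G$. RCC on non-modular subgroups then forces some $\langle x,K_{r_0}\rangle$ in this chain to be modular, and the choice $L=K_{r_0}$ would satisfy the statement. The main conceptual step, and presumably the only real obstacle, is the observation that Lemma~\ref{2}(i) applies not only to the given section $H/K$ but to every sub-section $K_r/K$ of it; this delivers the full $\mathbb{R}$-indexed family of modular subgroups $K_r$ on which RCC can then be invoked to pinpoint~$r_0$.
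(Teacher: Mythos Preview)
Your proof is correct and follows essentially the same route as the paper's: reindex by $\mathbb{Q}$, use Lemma~\ref{2}(i) to make every initial segment $K_r$ modular, show that $(\langle x,K_r\rangle)_{r\in\mathbb{R}}$ is strictly increasing via a modular-law computation, and apply RCC. The only cosmetic difference is that you intersect $\langle x,K_r\rangle$ with $H$ to obtain the uniform identity $\langle x,K_r\rangle\cap H=K_r$, whereas the paper compares $\langle x,L_{r_1}\rangle$ and $\langle x,L_{r_2}\rangle$ directly by intersecting with $L_{r_2}$; both are straightforward applications of the modularity of the $K_r$'s.
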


\begin{proof}
It can be assumed that $H/K=\underset{i\in \mathbb{Q}}{\text{Dr}}H_{i}/K$ with each $H_i\ne K$. For $r\in \mathbb{R}$, let $L_{r}/K=\underset{i<r}{\text{Dr}}H_{i}/K$, so that each $L_{r}$ is a modular subgroup of $G$ by Lemma \ref{2}. Let $r_1,r_2\in\mathbb{R}$ such that $r_1<r_2$, then  
$\left\langle x,L_{r_1}\right\rangle\leq\left\langle
x,L_{r_2}\right\rangle$ and we claim that $\left\langle x,L_{r_1}\right\rangle\neq\left\langle
x,L_{r_2}\right\rangle$. In fact, if were $\left\langle x,L_{r_1}\right\rangle =\left\langle
x,L_{r_2}\right\rangle $, since $\left\langle x\right\rangle \cap L_{r_1}\leq
\left\langle x\right\rangle \cap H\leq K\leq L_{r_2}$ and $L_{r_1}\leq L_{r_2}$ are modular subgroups, 
it would be $L_{r_1}=\left\langle x,L_{r_2}\right\rangle
\cap L_{r_1}=\left\langle L_{r_2},\left\langle x\right\rangle \cap
L_{r_1}\right\rangle =L_{r_2}$, a contradiction. Therefore $(\left\langle
x,L_{r}\right\rangle )_{r\in \mathbb{R}}$ is an $\mathbb{R}$-chain, and
hence there exists some $\left\langle x,L_{r}\right\rangle $ which is
modular in $G$.
\end{proof}

\medskip
In order to reduce the study of generalised radical groups to radical-by-finite groups, we need to prove that locally finite groups with RCC on non-modular subgroups are soluble-by-finite (see Proposition \ref{GenRad}), this will be follow from next result. Recall that the class of groups with modular subgroup lattice is local, i.e. a group $G$ has modular subgroup lattice if and only if each finitely generated subgroup of $G$ has modular subgroup lattice (see \cite{Pr}, Corollario\;1.4).

\medskip 
\begin{lemma}
\label{LFM}Let $G$ be a locally finite group with RCC on non-modular subgroups. Then $G$ either is a Chernikov group or has modular subgroup lattice.
\end{lemma}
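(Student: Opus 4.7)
The argument should parallel Lemmas~\ref{LFSFan} and~\ref{LFSF}. Assume $G$ is not a Chernikov group and aim to show $G$ has modular subgroup lattice. Shunkov's theorem provides an abelian subgroup of $G$ not satisfying the minimal condition, whose torsion part has an infinite socle; hence $G$ contains an infinite elementary abelian $p$-subgroup~$V$ for some prime~$p$. This is precisely the kind of infinite-direct-product section that will feed Lemmas~\ref{2}(i) and~\ref{1bis}.

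By Previato's locality theorem for modular subgroup lattices (see \cite{Pr}, Corollario~1.4), $G$ has modular subgroup lattice if and only if every finitely generated---hence, as $G$ is locally finite, every finite---subgroup of~$G$ does. Fix a finite subgroup $F\le G$. For each $x\in F$, pass first to $C_V(x)$, which has finite index in~$V$ and is therefore still an infinite elementary abelian group centralising~$x$, and then split off the finite subspace $C_V(x)\cap\langle x\rangle$ of the $\mathbb{F}_p$-vector space $C_V(x)$ to obtain an infinite elementary abelian subgroup $V_x\le C_V(x)$ with $V_x\cap\langle x\rangle=1$ and $[V_x,x]=1$. Applying Lemma~\ref{1bis} to~$V_x$ and~$x$ yields a subgroup $L_x\le V_x$ such that both $L_x$ and $\langle x,L_x\rangle=\langle x\rangle\times L_x$ are modular in~$G$.

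Modularity is inherited by overgroups: if $H\le K\le G$ and $H$ is modular in~$G$, then the defining identities, being universal over all subgroups of~$G$, certainly hold for all subgroups of~$K$. So the modular subgroups of~$G$ constructed above restrict to modular subgroups of~$F$, and the remaining task is to turn this supply of modular overgroups of cyclic subgroups of~$F$ into a proof that $F$ itself is an M-group, at which point Iwasawa's classification (see \cite{S94}) finishes the argument. The main obstacle is precisely this last transfer: passing from modularity of $\langle x,L_x\rangle$ to structural information about~$F$. A natural route is to split~$V$ into two infinite direct factors and apply Lemma~\ref{1bis} in each, producing two modular overgroups of $\langle x\rangle$ whose intersection equals $\langle x\rangle$ itself, and then exploit the modular identity together with Lemma~\ref{2}(i) to force $\langle x\rangle$ to be modular in~$G$; if that intersection step turns out to be too delicate in the full generality of modular subgroups, a fallback is to argue prime-by-prime along the Sylow structure of~$F$ using Iwasawa's theorem for finite $p$-groups.
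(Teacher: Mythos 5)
Your overall architecture (Shunkov to get an infinite elementary abelian subgroup, Previato's locality theorem to reduce to finite subgroups, Lemma~\ref{1bis} to produce modular subgroups $L_x$ and $\langle x,L_x\rangle$) matches the paper's proof, but the proposal has two genuine problems. First, the centralizer step is unjustified: there is no reason why $C_V(x)$ should have finite index in $V$ --- the element $x$ need not normalize $V$, let alone act on it with large fixed-point subgroup, since nothing places $V$ in the $FC$-centre of $G$. Fortunately this step is also unnecessary: Lemma~\ref{1bis} imposes no commutation or invariance condition whatsoever, only $\langle x\rangle\cap H\le K$, which you get simply by replacing $V$ with a complement in $V$ of the finite subgroup $V\cap F$ (exactly what the paper does with ``$A\cap X=\{1\}$''). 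You do not need $\langle x,L_x\rangle$ to be a direct product.

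Second, and more seriously, the step you yourself flag as ``the main obstacle'' is precisely where the proof happens, and neither of your proposed routes closes it. Intersections of modular subgroups of $G$ need not be modular (the paper explicitly restricts Lemma~\ref{2} to item $(i)$ for the properties $m$ and $per$), so producing two modular overgroups of $\langle x\rangle$ meeting in $\langle x\rangle$ does not make $\langle x\rangle$ modular; and you do not in fact need $\langle x\rangle$ to be modular in $G$, only in the finite subgroup $F$. The paper's trick is to use \emph{both} conclusions of Lemma~\ref{1bis} in tandem: since $L$ is a modular element of the subgroup lattice of $G$ and $L\cap F\le V\cap F=\{1\}$, the modular identity gives $\langle L,x\rangle\cap F=\langle x,L\cap F\rangle=\langle x\rangle$; and since $\langle L,x\rangle$ is modular in $G$, its intersection with $F$ is modular in $F$ (the standard fact that modularity passes to intersections with subgroups --- note that your ``inherited by overgroups'' remark does not apply here, because $L_x$ and $\langle x,L_x\rangle$ are not contained in $F$). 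Hence every cyclic subgroup of $F$ is modular in $F$, so $F$ has modular subgroup lattice, and locality finishes the argument. Without this computation the proof is incomplete.
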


\begin{proof}
If $G$ is not a Chernikov group, it contains an abelian subgroups whose
socle $A$ is infinite (see \cite{Shunkov}). Let $X$ be any finite subgroup
of $G$. Replacing $A$ by a suitable subgroup, it can be assumed that $A\cap
X=\{1\}$. If $x$ is any element of $X$, then Lemma \ref{1bis} yields that
there exists a subgroup $L$ of $A$ such that both $L$ and $\left\langle
L,x\right\rangle $ are modular in $G$; thus $\left\langle L,x\right\rangle
\cap X=\left\langle x,L\cap X\right\rangle =\left\langle x\right\rangle $ is
modular in $X$. Therefore all cyclic subgroups of $X$ are modular, and so $X$
has modular subgroup lattice. Since the class of groups with modular
subgroup lattice is local, we have that $G$ itself has modular subgroup lattice.
\end{proof}

\medskip 
\begin{lemma}
\label{5}Let $G$ be a radical-by-finite group with RCC on non-modular subgroups. Then $G$
is either a soluble-by-finite minimax group or has modular subgroup lattice.
\end{lemma}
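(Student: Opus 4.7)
The plan is to derive the statement by reducing to the characterisation proved in \cite{DM18}, which, for a generalised radical group, equates the weak chain conditions on non-modular subgroups with the condition that every non-minimax subgroup be modular and with the dichotomy ``soluble-by-finite minimax or modular subgroup lattice''. Assuming that $G$ is not minimax, it therefore suffices to show that every non-minimax subgroup of $G$ is modular in $G$; the conclusion then follows from \cite{DM18}.

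The first step is a dichotomy on the abelian subgroups of $G$. If some abelian subgroup of $G$ has infinite total rank, then $G$ contains a subgroup $A$ that is the direct product of infinitely many non-trivial cyclic subgroups; in this situation I would argue directly that $G$ has modular subgroup lattice. Given any $x\in G$, one can arrange $\langle x\rangle\cap A=\{1\}$ by discarding the finitely many factors of $A$ whose projections meet $\langle x\rangle$; Lemma \ref{1bis} then produces a subgroup $L\leq A$ with both $L$ and $\langle x,L\rangle$ modular in $G$. Modularity of $L$ combined with the modular identity (exactly as in the proof of Lemma \ref{LFM}) forces $\langle x\rangle$ to be modular in every finitely generated subgroup of $G$ containing it, and the locality of the class of groups with modular subgroup lattice (recalled just before Lemma \ref{LFM}) lifts this to $G$.

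In the remaining case all abelian subgroups of $G$ have finite total rank; then the soluble radical of $G$ admits a finite series with abelian factors of finite total rank by \cite{C}, so $G$ has finite Pr\"ufer rank. Since $G$ is not minimax, it contains an abelian non-minimax subgroup by \cite{R72} Part 2, Theorem 10.35, and invoking Lemma 2.7 of \cite{DM07} yields that $G'$ is polycyclic-by-finite. Hence, for every non-minimax subgroup $H$ of $G$, the derived subgroup $H'$ is polycyclic-by-finite and the abelian factor $H/H'$ is not minimax; it therefore has a homomorphic image that is the direct product of infinitely many non-trivial subgroups. Lemma \ref{2}(i), applied to the corresponding section of $G$, yields that $H$ is modular in $G$. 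Applying the equivalence from \cite{DM18} then gives the lemma.

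The main obstacle is the direct-product case. In the parallel results for (almost, nearly) normal and normal subgroups, treated in Theorems \ref{teoan}, \ref{teonn} and \ref{teon}, one can invoke Lemma \ref{catena-}, which requires the intersection of two $\chi$-subgroups to again be a $\chi$-subgroup; modularity fails this closure property. One is therefore left with Lemma \ref{1bis} as the sole source of new modular subgroups, and the delicate point is to transfer modularity from $\langle L,x\rangle$ back to the cyclic subgroup $\langle x\rangle$ via the modular identity and the locality of the modular-subgroup-lattice property: this bookkeeping, and the careful choice of $A$ for each cyclic subgroup, constitute the technical core of the proof.
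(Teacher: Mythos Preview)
Your overall strategy---reduce to \cite{DM18} by showing that every non-minimax subgroup of $G$ is modular---is exactly the paper's, but the execution misses the key shortcut and both of your cases have genuine gaps.

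The paper needs no dichotomy on total rank. For a non-minimax $H$ it takes an abelian non-minimax $A\le H$, a quotient $A/B$ that is an infinite direct product, and for each $x\in H\smallsetminus B$ adjusts $A/B$ so that $\langle x\rangle\cap A\le B$ and applies Lemma~\ref{1bis} to get $L_x\le A$ with both $L_x$ and $\langle x,L_x\rangle$ modular in $G$. The decisive fact you do not exploit is that \emph{a join of modular subgroups is again modular} (see \cite{Pr}); since $H=\big\langle\,\langle x,L_x\rangle:x\in H\smallsetminus B\,\big\rangle$, this gives modularity of $H$ in one stroke.

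In your Case~1 you try to transplant the argument of Lemma~\ref{LFM}, but that argument rests on local finiteness: there one works inside a \emph{finite} subgroup $X$ with $A\cap X=\{1\}$, so the modular law yields $\langle L,x\rangle\cap X=\langle x,L\cap X\rangle=\langle x\rangle$, and hence $\langle x\rangle$ is modular in $X$. In a radical-by-finite group a finitely generated $Y\ni x$ need not be finite, and for your fixed choice of $L$ (depending only on $x$) there is no reason for $L\cap Y$ to be trivial; thus the identity does not collapse to $\langle x\rangle$, and the locality step fails.

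In your Case~2 you invoke Lemma~2.7 of \cite{DM07} to make $G'$ polycyclic-by-finite, but that lemma concerns chain conditions on non-(nearly normal) subgroups, not non-modular ones, so its hypotheses are not available here. Without control of $H'$ you cannot conclude that $H/H'$ is non-minimax, and the appeal to Lemma~\ref{2}(i) is unsupported. The paper sidesteps this entirely by working with the abelian non-minimax subgroup \emph{inside} $H$ and using the join-of-modulars fact, which renders the whole rank discussion unnecessary.
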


\begin{proof}
Let $H$ be any non-minimax subgroup of $G$. By the above quoted result of \cite{DM18}, it is enought to prove that if $H$ is a modular subgroup. 

There exists an abelian subgroup $A$ of $H$ which is not minimax (see \cite{R72} Part 2, Theorem 10.35), and so $A$ has an homomorphic image $A/B$ which is the
direct product of infinitely many non-trivial subgroups. Let $x$ be any
element of $H\smallsetminus B$; clearly replacing $A/B$ with a suitable
direct factor which is likewise the direct product of infinitely many
non-trivial subgroups, it can be assumed that $\left\langle x\right\rangle
\cap A\leq B$. Therefore Lemma \ref{1bis} yields that there exists a
subgroup $L_{x}$ of $A$ such that $L_{x}$ and $\left\langle
x,L_{x}\right\rangle $ are modular subgroup of $G$. 
Since the join of modular subgroups is likewise modular (see for instance \cite{Pr}, Proposizione\;1.2), it follows that $%
H=\left\langle \left\langle x,L_{x}\right\rangle :x\in H\smallsetminus
B\right\rangle $ is modular in $G$ and the proof is completed.
\end{proof}

\medskip
We are now in position to prove the Framework Statement when $\chi=m$.
\medskip

\begin{theorem}
\label{teom} Let $G$ be a generalised radical group. Then the following are
equivalent:

\begin{itemize}
\item[(i)] $G$ satisfies the weak minimal condition on non-modular
subgroups;

\item[(ii)] $G$ satisfies the weak maximal condition on non-modular
subgroups;

\item[(iii)] $G$ satisfies the weak double condition on non-modular
subgroups.

\item[(iv)]  $G$ satisfies the real chain condition on non-modular subgroups.

\item[(v)] either $G$ is a soluble-by-finite minimax group or all subgroups of $G$ are modular.
\end{itemize}
\end{theorem}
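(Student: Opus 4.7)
The plan is to follow exactly the same template already used in the proofs of Theorem \ref{teoan} and Theorem \ref{teonn}: since the equivalence of (i), (ii), (iii) and (v) for generalised radical groups is the main result of \cite{DM18}, and since (iii) implies (iv) by Proposition \ref{WDDC_implies_deviation}, the only thing to establish is the implication (iv) $\Rightarrow$ (v).

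So I would assume that $G$ is a generalised radical group satisfying RCC on non-modular subgroups. The first step is to reduce to the radical-by-finite case. For this I invoke Proposition \ref{GenRad} applied to the class $\mathfrak{X}$ of groups with RCC on non-modular subgroups: this class is clearly closed under subgroups and homomorphic images (the lattice of subgroups of a section embeds in the lattice of subgroups of $G$, and modularity transfers correctly), and Lemma \ref{LFM} together with the fact stated in the preceding section that any locally finite group with modular subgroup lattice is metabelian show that each locally finite $\mathfrak{X}$-group is soluble-by-finite (a Chernikov group is soluble-by-finite, and a locally finite group with modular subgroup lattice is metabelian). Hence $G$ is radical-by-finite.

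At this point Lemma \ref{5} applies directly and yields that $G$ is either a soluble-by-finite minimax group or has modular subgroup lattice, which is precisely condition (v). This closes the chain of implications.

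I do not expect any genuine obstacle: the substantive work has already been done in Lemmas \ref{1bis}, \ref{LFM} and \ref{5}, which were designed exactly so that the final theorem reduces to a one-line invocation of the earlier equivalence from \cite{DM18}. The only point that needs a moment of care is the verification that the class of groups with RCC on non-modular subgroups satisfies the hypotheses of Proposition \ref{GenRad} (closure under subgroups and quotients, and soluble-by-finiteness of locally finite members), but this is routine once Lemma \ref{LFM} and the metabelianity of locally finite modular-lattice groups are in hand.
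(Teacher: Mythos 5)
Your proposal is correct and follows essentially the same route as the paper: the equivalence of (i), (ii), (iii), (v) is quoted from \cite{DM18}, (iii)$\Rightarrow$(iv) is Proposition \ref{WDDC_implies_deviation}, and (iv)$\Rightarrow$(v) is obtained by combining Lemma \ref{LFM} with Proposition \ref{GenRad} (using the solubility of locally finite groups with modular subgroup lattice) and then applying Lemma \ref{5}. No gaps.
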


\begin{proof}
As already noted, conditions $(i)$, $(ii)$ and $(v)$ are equivalent, and clearly imply $(iii)$; moreover, $(iii)$ implies $(iv)$ by Proposition \ref{WDDC_implies_deviation}. On the other hand, since any locally finite group with modular subgroup lattice is soluble (see \cite{S94}, Theorem 2.4.21), if $G$ satisfies $(iv)$, then $G$ is radical-by-finite by Lemma \ref{LFM} and Proposition \ref{GenRad} and so it satisfies $(v)$ by Lemma \ref{5}. The theorem is proved.
\end{proof}
%
%
%
%
%


\medskip
In \cite{DM18}, weak chain conditions on non-permutable subgroups have been also considered and it was proved that all results on weak chain conditions on non-modular subgroups have a corresponding with non-permutable subgroups. Here the wished results for groups in which the poset of all non-permutable subgroups has RCC can be obtained just replacing modular subgroups with permutable subgroups in the above arguments or, in an independent way, as a consequence of the following.

\begin{lemma}\label{xqn} Let $G$ be a periodic locally soluble group with RCC on non-permutable subgroups. Then either $G$ is a Chernikov group or all subgroups of $G$ are permutable.
\end{lemma}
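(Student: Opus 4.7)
The argument will follow the pattern of Lemma \ref{LFM}, with ``modular'' systematically replaced by ``permutable''.

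My first step is to establish a permutable analogue of Lemma \ref{1bis}. Assume $G$ has RCC on non-permutable subgroups, let $H/K = \mathrm{Dr}_{i \in \mathbb{Q}} H_i/K$ with each $H_i \neq K$ be a section of $G$, and let $x \in G$ satisfy $\langle x\rangle \cap H \le K$. Setting $L_r/K = \mathrm{Dr}_{i<r} H_i/K$ for $r \in \mathbb{R}$, Lemma \ref{2}(i), which (as noted in the paper) holds also for $\chi = per$, makes each $L_r$ permutable in $G$, so each $\langle x, L_r\rangle = \langle x\rangle L_r$ is a subgroup. I then verify that the chain $(\langle x\rangle L_r)_{r \in \mathbb{R}}$ is strictly increasing: an equality $\langle x\rangle L_{r_1} = \langle x\rangle L_{r_2}$ for $r_1 < r_2$ would let me write each $\ell_2 \in L_{r_2}$ as $x^n \ell_1$ with $\ell_1 \in L_{r_1}$, forcing $x^n \in L_{r_2} \cap \langle x\rangle \le H \cap \langle x\rangle \le K \le L_{r_1}$ and hence $\ell_2 \in L_{r_1}$, a contradiction. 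RCC then forces some $\langle x\rangle L_r$ to be permutable in $G$.

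Next, suppose $G$ is not a Chernikov group. Since $G$ is locally finite, Shunkov's theorem (as used in the proof of Lemma \ref{LFM}) yields an abelian subgroup of $G$ whose socle $A$ is infinite, hence a direct product of infinitely many cyclic subgroups of prime order. Let $X$ be an arbitrary finite subgroup of $G$; since $X \cap A$ is finite I may trim $A$ so that $A \cap X = \{1\}$. Applying the previous step to each $x \in X$ (with $H = A$, $K = \{1\}$) provides $L \le A$ such that $\langle x\rangle L$ is permutable in $G$. For any $Y \le X$, permutability gives $(\langle x\rangle L)Y = Y(\langle x\rangle L)$; intersecting both sides with $X$ and applying Dedekind's law twice, while using $L \cap X = \{1\}$, yields $\langle x\rangle Y = Y \langle x\rangle$. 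Hence $\langle x\rangle$ is permutable in $X$.

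All cyclic subgroups of the finite group $X$ being permutable in $X$, the group $X$ is quasihamiltonian. Quasihamiltonicity then transfers from the finite subgroups of $G$ to $G$ itself: for $H, K \le G$ and $h \in H$, $k \in K$, the finite subgroup $\langle h, k\rangle$ is quasihamiltonian, so $hk \in \langle h\rangle \langle k\rangle = \langle k\rangle \langle h\rangle \subseteq KH$, whence $HK = KH$. Thus $G$ is quasihamiltonian, that is, all subgroups of $G$ are permutable. I expect the main delicate point to be the descent of permutability from $\langle x\rangle L$ in $G$ to $\langle x\rangle$ in $X$: the lattice-theoretic modular identity exploited in Lemma \ref{LFM} is here replaced by two applications of Dedekind's law combined with the fact that $\langle x\rangle L$ permutes with every subgroup of $G$.
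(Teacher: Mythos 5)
Your proof is correct, but it takes a genuinely different route from the one the paper uses for this lemma. You carry out the programme the paper only sketches (``replacing modular subgroups with permutable subgroups in the above arguments''): you build a permutable analogue of Lemma \ref{1bis}, using Lemma \ref{2}(i) for $\chi=per$ to make each layer $L_r$ permutable so that $\langle x\rangle L_r$ is a subgroup, extract a permutable $\langle x\rangle L$ from the resulting strictly increasing $\mathbb{R}$-chain, and then descend to $\langle x\rangle$ inside a finite subgroup $X$ by intersecting with $X$ and applying Dedekind's law together with $L\cap X=\{1\}$ --- the right permutable substitute for the modular identity exploited in Lemma \ref{LFM} (equivalently, the standard fact that $P\cap X$ is permutable in $X$ whenever $P$ is permutable in $G$). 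The paper instead gives the advertised ``independent'' argument: for $x,y\in G$ it invokes Zaicev's theorem on locally soluble groups to obtain an abelian $\langle x,y\rangle$-invariant subgroup without the minimal condition, passes via Lemma \ref{anor} to a direct product of infinitely many finitely generated $\langle x,y\rangle$-invariant factors, applies Lemma \ref{1} with $L=\langle x\rangle$ to get a permutable $B^{\ast}\langle x\rangle$, and reads off $\langle x\rangle\langle y\rangle=\langle y\rangle\langle x\rangle$ directly. The trade-off is real: the paper's route avoids the nontrivial join result behind Lemma \ref{2}(i) for permutable subgroups but genuinely uses local solubility (through Zaicev), whereas your route leans on that join result but needs only local finiteness (Shunkov), so it actually proves the statement for arbitrary locally finite groups with RCC on non-permutable subgroups. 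Both arguments reduce to showing that all cyclic subgroups are permutable, which suffices since a group whose cyclic subgroups are all permutable is quasihamiltonian; your closing locality step makes this explicit.
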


\begin{proof}
Assume that $G$ is not a Chernikov group and let $x,y\in G$. Clearly $\langle x,y\rangle$ is finite and so, since $G$ is locally soluble, there exists an abelian $\langle x,y\rangle$-invariant subgroup $A$ which does not satisfy the minimal condition (see \cite{Z74}). Replacing $A$ by its socle, it can be assumed that $A$ is the direct product of infinitely many cyclic groups of prime order. Application of Lemma \ref{anor} gives that $A$ contains a subgroup $B$ which is the direct product of infinitely many non-trivial finite $\langle x,y\rangle$-invariant subgroups. Clearly it can be assumed that $B\cap \langle x,y\rangle =\{1\}$, hence Lemma \ref{1} yields that $B$ contains a normal subgroup $B^{\ast}$ such that $\langle x\rangle B^{\ast}=B^{\ast}\langle x\rangle$ is permutable in $B$. Hence
\[
\langle x\rangle \langle y\rangle \subseteq (B^{\ast}\langle x\rangle)  \langle y\rangle =\langle y\rangle(B^{\ast}\langle x\rangle)=\langle y\rangle \langle x\rangle B^{\ast}
\]
and so, since $B\cap \langle x,y\rangle =\{1\}$, it follows that $\langle x\rangle \langle y\rangle \subseteq \langle y\rangle \langle x\rangle$. Similarly  $\langle y\rangle \langle x\rangle \subseteq \langle x\rangle \langle y\rangle$ and hence  $\langle x\rangle \langle y\rangle = \langle y\rangle \langle x\rangle$.
Therefore all (cyclic) subgroups of $G$ are permutable.
\end{proof}

\begin{theorem}
\label{teoqn} Let $G$ be a generalised radical group. Then the following are
equivalent:

\begin{itemize}

\item[(i)] $G$ satisfies the weak minimal condition on non-permutable
subgroups;

\item[(ii)] $G$ satisfies the weak maximal condition on non-permutable
subgroups;

\item[(iii)] $G$ satisfies the weak double condition on non-permutable
subgroups.

\item[(iv)]  $G$ satisfies the real chain condition on non-permutable subgroups.

\item[(v)] either $G$ is a soluble-by-finite minimax group or all subgroups of $G$ are permutable.
\end{itemize}
\end{theorem}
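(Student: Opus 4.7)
The plan is straightforward: since the equivalence of (i)--(iii) with (v) is already established in \cite{DM18}, and the implication (iii)$\Rightarrow$(iv) follows from Proposition~\ref{WDDC_implies_deviation}, the only item to be proved is (iv)$\Rightarrow$(v). So I would assume $G$ is a generalised radical group with RCC on non-permutable subgroups and aim to derive (v).

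The decisive reduction rests on a trivial inclusion of properties: every permutable subgroup is modular (permutable being equivalent to modular and ascendant), so any non-modular subgroup is automatically non-permutable. In particular, RCC on non-permutable subgroups instantly yields RCC on non-modular subgroups, and Theorem~\ref{teom} applied to $G$ splits the problem into two cases. If Theorem~\ref{teom} returns that $G$ is a soluble-by-finite minimax group, then (v) holds at once; otherwise $G$ has modular subgroup lattice, and it remains to treat this alternative.

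In the modular-lattice case I would invoke the structural results quoted in the text. If $G$ is non-periodic, then $G$ is quasihamiltonian, so every subgroup of $G$ is permutable and (v) is satisfied. If $G$ is periodic, then being generalised radical forces it to be locally finite (periodic locally nilpotent groups are locally finite, and local finiteness is preserved by extensions and ascending unions, so it propagates through the defining ascending series); moreover, by Theorem~2.4.21 of \cite{S94}, a locally finite group with modular subgroup lattice is soluble. Hence the hypotheses of Lemma~\ref{xqn} are satisfied, and that lemma yields either that $G$ is a Chernikov group (hence minimax, giving (v)) or that every subgroup of $G$ is permutable (again (v)).

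The only mildly delicate point in this outline is the periodic sub-case: one must verify that a periodic generalised radical group with modular subgroup lattice genuinely satisfies the locally-finite and locally-soluble hypotheses of Lemma~\ref{xqn}. Once this is noted, the proof is essentially an assembly of Proposition~\ref{WDDC_implies_deviation}, Theorem~\ref{teom}, the Iwasawa-type classification of groups with modular subgroup lattice recalled at the beginning of this section, and Lemma~\ref{xqn}; no new combinatorial argument on $\mathbb{R}$-chains is required.
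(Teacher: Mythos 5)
Your proof is correct and follows essentially the same route as the paper: reduce to (iv)$\Rightarrow$(v), observe that RCC on non-permutable subgroups implies RCC on non-modular subgroups, apply Theorem~\ref{teom}, and settle the modular-lattice alternative via the Iwasawa-type classification together with Lemma~\ref{xqn}. The paper's version is merely terser, citing Theorem~2.4.11 of \cite{S94} where you spell out the periodic/non-periodic dichotomy and verify explicitly that the periodic case meets the locally finite, locally soluble hypotheses of Lemma~\ref{xqn}.
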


\begin{proof} As in Theorem \ref{teom}, it is enough to prove that $(iv)$ implies $(v)$. Hence assume that $G$ satisfies $(iv)$. Theorem \ref{teom} yields that either $G$ is a soluble-by-finite minimax groups or has modular subgroup lattice, so that application of Theorem 2.4.11 of \cite{S94} and Lemma\;\ref{xqn} give that $(v)$ holds, and so the theorem is proved.\end{proof}

%

\medskip

\section{Real chain condition on non-subnormal subgroups}

The weak minimal and the weak maximal condition on non-subnormal subgroups have been considered in \cite{KSwmin} and in \cite{KSwmax} respectively. It turns out that {\it if $G$ is a generalised radical group $G$ satisfying the weak minimal condtions on non-subnormal subgroups, then either $G$ is a soluble-by-finite minimax group or any subgroup of $G$ is subnormal}. On the other hand, there exists non-minimax groups satisfying the weak maximal condition on non-subnormal subgroups which still have non-subnormal subgroups. Indeed, if $G=A\rtimes\langle g\rangle$ where $A=\underset{i\in\mathbb{N}}{%
\mathrm{Dr}}{\langle a_{i}\rangle}$ is an infinite elementary abelian $p$-group ($p$ prime) and $g$ is the automorphism of infinite order  of $A$ such that $[a_1, g]=1$ and $[a_{i+1},g]=a_i$ for all $i\geq 1$, then $G$ is an hypercentral non-minimax group satisfying the weak maximal condition on non-subnormal subgroups which is not a Baer group (see \cite{KSwmax}). Recall here that the {\it Baer radical} of a group is the subgroups generated by all cyclic subnormal subgroups and a group is said to be a {\it Baer group} if it concides with its Baer radical; in particular, in a Baer group all finitely generated subgroups are subnormal and nilpotent.

\medskip
Notice that the above example $G=A\rtimes \langle g\rangle$ does not satisfy the weak minimal condition on non-subnormal subgroups but the poset of all non-subnormal subgroups of $G$ has deviation (see the introduction of \cite{KS}), and so also RCC by Proposition \ref{WDDC_implies_deviation}. Hence Framework Statement cannot be proved in his form when $\chi=sn$ is the property for a subgroup to be subnormal . However, for locally finite groups the weak minimal condition on non-subnormal subgroups is equivalent to the weak maximal condtion on non-subnormal subgroups, and here we are able to prove the Framework Statement when $\chi=sn$  within the universe of periodic soluble groups, improving Theorem 1 of \cite{GKR1} which concernes with soluble periodic groups with deviation on the poset of non-subnormal subgroups.

\begin{lemma}
\label{snper}Let $G$ be a periodic group with RCC on non-subnormal subgroups. If $G$
contains an abelian subgroup $A$ which does not statisfy the minimal condition,
then $G$ is a Baer group.
\end{lemma}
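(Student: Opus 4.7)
The plan is to show that every cyclic subgroup of $G$ is subnormal, which by definition gives that $G$ is a Baer group. Fix $x\in G$ and set $L=\langle x\rangle$; since $G$ is periodic, $L$ is finite. Subnormality is preserved under intersections, so Lemma \ref{catena-} applies with $\chi=sn$. I therefore aim to produce a subgroup $H\leq G$ that is the direct product of infinitely many $L$-invariant non-trivial subgroups with $L\cap H=\{1\}$; Lemma \ref{catena-} will then force $L=\langle x\rangle$ to be subnormal in $G$, and since $x$ was arbitrary, the conclusion will follow.

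The construction follows the pattern of the proof of Lemma \ref{xqn}. First, replace $A$ by an $L$-invariant abelian subgroup $\tilde A$ of $G$ still lacking the minimal condition. This is granted by a Zaicev-type result (see \cite{Z74}) in the setting of periodic locally soluble groups: such a group containing an abelian subgroup without the minimal condition contains, for each finite subgroup $F$, an $F$-invariant abelian subgroup without the minimal condition. Taking $F=L$ delivers $\tilde A$.

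Next, pass to the socle $S=\mathrm{Soc}(\tilde A)=\bigoplus_{p}\tilde A[p]$. This is a characteristic subgroup of $\tilde A$, hence $L$-invariant, and because $\tilde A$ is periodic abelian non-Chernikov it is infinite and is itself the direct product of infinitely many non-trivial cyclic subgroups of prime order. In the group $S\rtimes L$, where $L$ is finite, every subgroup of $S$ has only finitely many conjugates and is therefore almost normal; applying Lemma \ref{anor} to this group with normal subgroup $S$ yields a subgroup $B\leq S$ that is the direct product of infinitely many finitely generated $L$-invariant non-trivial subgroups. Since $L$ is finite, $L\cap B$ is finite, and its elements involve only finitely many of the direct factors of $B$; discarding these yields $H\leq B$ which is still the direct product of infinitely many $L$-invariant non-trivial subgroups and now satisfies $L\cap H=\{1\}$. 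Lemma \ref{catena-} then concludes that $L$ is subnormal in $G$.

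The principal obstacle is the very first step: without local solubility, there is no obvious way to replace $A$ by an $L$-invariant version retaining the failure of the minimal condition, since the conjugates $x^{-i}Ax^i$ may be very different subgroups of $G$ with small intersection. This is the reason the framework statement for $\chi=sn$ is claimed only within periodic soluble groups; once this ambient hypothesis is admitted, the steps above reduce the argument to direct invocations of the earlier lemmas of the paper.
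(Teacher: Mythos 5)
Your overall strategy (reduce to showing every cyclic subgroup is subnormal, manufacture an $L$-invariant direct product of infinitely many non-trivial subgroups meeting $L$ trivially via the socle, Lemma \ref{anor} and a discard-finitely-many-factors step, then invoke Lemma \ref{catena-}) is exactly the paper's, and those steps are sound. But there is a genuine gap at the point you yourself flag: you can only launch the argument by applying Zaicev's theorem to get an $L$-invariant abelian subgroup without the minimal condition, and for that you assume $G$ is (locally) soluble. The lemma, however, is stated for an arbitrary periodic group, and it must hold in that generality: it is the sole ingredient of Corollary \ref{corsn}, which applies it to locally finite groups that are not assumed soluble. So declaring that ``this is the reason the framework statement for $\chi=sn$ is claimed only within periodic soluble groups'' does not rescue the proof; it changes the statement into a weaker one that would break the logical structure of the section.

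The missing idea is a bootstrapping step that manufactures a locally soluble subgroup to apply Zaicev in, without assuming anything about $G$. First replace $A$ by its socle and apply Lemma \ref{catena-} directly to each cyclic subgroup $X$ of $A$: writing $A=A_0\times A_1$ with $X\leq A_0$ finite and $A_1$ an infinite direct product of factors centralized by $X$ and meeting $X$ trivially, one gets that $X$ is subnormal in $G$. Hence $A$ lies in the Baer radical $R$ of $G$, and $R$ fails the minimal condition. Now for any $g\in G$ the subgroup $R\langle g\rangle$ is (locally nilpotent)-by-(finite cyclic), hence periodic and locally soluble, and it is \emph{there} — not in $G$ — that Zaicev's theorem is applied to produce a $\langle g\rangle$-invariant abelian subgroup without the minimal condition; your remaining steps then go through verbatim and give $g\in R$, so $G=R$. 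Without this detour through the Baer radical your argument proves only the locally soluble case of the lemma.
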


\begin{proof} Replacing $A$  by its socle it can be supposed that $A$ is
the direct product of infinitely many cyclic non-trivial subgroups. As a consequece of Lemma \ref{catena-} it can be obtained that all cyclic subgroups of $A$ are
subnormal in $G$, hence $A$ is contained in the Baer radical $R$ of $G$ and
hence $R$ does not satisfy the minimal condition. Let $g$ be any element of $%
G$. Then $\langle R,g\rangle$ is locally soluble and hence there is no loss
of generality if we assume that $A$ is $\langle g\rangle$-invariant (see 
\cite{Z74}). Then $A$ has finite index in $\langle A,g\rangle$ and hence all
subgroups of $A$ are almost normal in $G$. Thus Lemma \ref{anor} yelds that $%
A$ contains a subgroup which is the direct produt of infinitely many
finitely generated $\langle g\rangle$-invariant non-trivial subgroups, and
so it follows from Lemma \ref{catena-} that $g\in R$. Thus $G=R$ is a Baer
group.
\end{proof}

\medskip

\begin{corollary}
\label{corsn} Let $G$ be a locally finite group RCC on non-subnormal subgroups. Then $G$ is either a Chernikov group or a Baer group.
\end{corollary}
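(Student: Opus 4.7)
The proof is a direct reduction to Lemma \ref{snper} via Shunkov's theorem, which has already been invoked in the same manner several times in the paper (see the proofs of Lemmas \ref{LFSFan}, \ref{LFSF}, \ref{LFM}). The plan is as follows.

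Assume $G$ is not a Chernikov group, and aim to show that $G$ is a Baer group. Since $G$ is locally finite, it is in particular periodic, so Shunkov's result \cite{Shunkov} applies and yields an abelian subgroup $A$ of $G$ which does not satisfy the minimal condition. At this point $G$ is a periodic group with RCC on non-subnormal subgroups that contains such a subgroup $A$, so the hypotheses of Lemma \ref{snper} are satisfied. Applying Lemma \ref{snper} directly gives that $G$ is a Baer group, completing the proof.

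There is no real obstacle here, since the substantive work has already been done inside Lemma \ref{snper} (where the combination of Lemmas \ref{catena-}, \ref{anor}, and the local-soluble trick of \cite{Z74} was used to push every element of $G$ into the Baer radical). The corollary is essentially a packaging statement that pairs Lemma \ref{snper} with Shunkov's theorem to rule out the Chernikov case.
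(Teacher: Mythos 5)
Your proposal is correct and is exactly the paper's argument: the paper's proof of this corollary reads ``This follows from \cite{Shunkov} and Lemma \ref{snper}'', i.e.\ the same reduction via Shunkov's theorem to produce an abelian subgroup without the minimal condition and then apply Lemma \ref{snper}. Nothing further is needed.
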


\begin{proof}
This follows from \cite{Shunkov} and Lemma \ref{snper}.
\end{proof}

\medskip In our argument we need the following easy remark. \medskip

\begin{lemma}
\label{a1} Let $G$ be a group and let $N$ be a normal subgroup. If $N$
satisfies maximal (resp. minimal) condition on $G$-invariant subgroups and $%
G/N$ satisfies the weak maximal (resp. weak minimal) condition on normal
subgroups, then $G$ satisfies the weak maximal (resp. weak minimal)
condition on normal subgrops.
\end{lemma}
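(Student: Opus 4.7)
The plan is to argue by contradiction, treating the maximal case in detail; the minimal case is entirely dual. Suppose that $G$ does not satisfy the weak maximal condition on normal subgroups, so there is a strictly ascending chain $H_1<H_2<\cdots$ of normal subgroups of $G$ with $|H_{i+1}:H_i|$ infinite for every $i$. The strategy is to split the information carried by this chain into two pieces, its intersection with $N$ and its image in $G/N$, and then derive a contradiction from the hypothesis on $G/N$.

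First I would consider the ascending chain $(H_i\cap N)$ of $G$-invariant subgroups of $N$. Because $N$ is assumed to satisfy the full (not weak) maximal condition on such subgroups, this chain must stabilize: there exists an index $k$ with $H_i\cap N=H_k\cap N$ for all $i\geq k$. This is the only point at which the hypothesis on $N$ is used, and it is essential that the condition is the full chain condition, not its weak variant, since a priori some of the successive intersections could have finite index.

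Next I would turn to the image chain $(H_iN/N)_{i\geq k}$ in $G/N$. The key computation is that its successive indices remain infinite. Using Dedekind's Modular Law, together with $H_i\leq H_{i+1}$ and the stabilisation of the intersections, I obtain
\[
H_{i+1}\cap H_iN \,=\, H_i(H_{i+1}\cap N)\,=\,H_i(H_k\cap N)\,=\,H_i,
\]
so the canonical map $H_{i+1}/H_i\to H_{i+1}N/H_iN$ is an isomorphism and $|H_{i+1}N:H_iN|=|H_{i+1}:H_i|$ is infinite. Thus $(H_iN/N)_{i\geq k}$ is a strictly increasing chain of normal subgroups of $G/N$ with infinite successive indices, contradicting the weak maximal condition assumed for $G/N$.

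The minimal case is handled symmetrically: one reverses every chain, uses the minimal condition on $G$-invariant subgroups of $N$ to stabilise the intersections, and applies Dedekind in the form $H_{i+1}N\cap H_i=H_{i+1}(H_i\cap N)$ to preserve the infinite successive indices when passing to $G/N$. I expect no genuine obstacle here; the argument is standard bookkeeping with intersections and joins, and the only conceptual point is the asymmetry of hypotheses that forces one to use the full (rather than weak) chain condition on $N$ in order to absorb finite jumps that the weak condition on $G/N$ is not designed to control.
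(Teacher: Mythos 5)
Your argument is correct and is essentially the paper's own proof, merely phrased contrapositively: both use the full maximal (resp.\ minimal) condition on $G$-invariant subgroups of $N$ to stabilise the chain of intersections, and then the same Dedekind computation $H_{i+1}\cap H_iN=H_i(H_{i+1}\cap N)=H_i$ to transfer the infinite successive indices between the chain in $G$ and its image in $G/N$. No issues.
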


\begin{proof}
Let $(G_i)_{i\in\mathbb{N}}$ be an ascending chain of normal subgroups of $G$. Then $(G_i\cap N)_{i\in\mathbb{N}}$ is an ascending chain of $G$-invariant
subgroups of $N$ and hence there exists a positive integer $n$ such that the
index $G_{i+1}\cap N=G_i\cap N$ for any $i\geq n$. On the other hand, $%
(G_iN/N)_{i\geq n}$ is an ascending normal chain and so there exits a
positive integer $m\geq n$ such that the index $|G_{i+1}N/N:G_iN/N|$ is
finite for any $i\geq m$. Then for every $i\geq m$ we have that the index 
\begin{align*}
|G_{i+1}N:G_iN|=&|G_{i+1}(G_iN):G_iN|=|G_{i+1}:G_iN\cap G_{i+1}|= \\
=&|G_{i+1}:G_i(N\cap G_{i+1})|=|G_{i+1}:G_i(N\cap G_{i})|=|G_{i+1}:G_{i}|
\end{align*}
is finite. Thus the result with weak maximal conditions is proved, the
corresponding resut with weak minimal conditions can be proved similarly.
\end{proof}

\medskip Recall that if $G$ is a periodic Baer group, any subnormal abelian
divisible subgroup is contained in the centre of $G$ (see for instance \cite{KSwmin}, Lemma 5.1).

\begin{lemma}
\label{pru} Let $G=AB$ a periodic Baer  group with RCC on non-subnormal subgroups, where $A$ is an abelian normal subgroup of $G$ and $B$ is abelian divisible.  Then $G$ is abelian.
\end{lemma}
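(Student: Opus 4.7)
The plan is to argue by contradiction: assume $G$ is not abelian and aim to show that $B$ is subnormal in $G$. Once this is established, the cited Lemma~5.1 of \cite{KSwmin} forces $B\le Z(G)$ and hence $G=AB$ is abelian---a contradiction. So the whole argument reduces to proving subnormality of $B$.

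First I would reduce to the case where $G$ is a $p$-group. The periodic Baer group $G$ is periodic locally nilpotent, and so decomposes as the direct product of its primary components $G=\mathrm{Dr}_p G_p$, a decomposition inherited by the normal abelian $A$ and the divisible abelian $B$. Each $G_p=A_p B_p$ satisfies all the hypotheses (note that RCC on non-subnormal subgroups passes to direct factors since subnormality is decided inside $G_p$), and $G$ is abelian iff each $G_p$ is; so I may assume $G$ is a $p$-group, and then $B=\mathrm{Dr}_{i\in I}P_i$ with $P_i\cong\mathbb{Z}(p^\infty)$. If $|I|$ is infinite, $B$ itself is a direct product of infinitely many non-trivial subgroups, so Lemma~\ref{2}(i) applied to the section $B/\{1\}$ immediately yields $B$ subnormal in $G$.

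It remains to handle the case $|I|$ finite, i.e.\ $B$ Chernikov. If $A$ is also Chernikov, then $G=AB$ is Chernikov; a Chernikov locally nilpotent group is nilpotent (its divisible finite residual is central by Lemma~5.1 of \cite{KSwmin}, so $G/Z(G)$ is finite, and then a standard argument via the lower central series gives nilpotence of $G$), so every subgroup of $G$ is subnormal, in particular $B$. If $A$ is not Chernikov, the socle $E=\Omega_1(A)$ is an infinite elementary abelian $p$-subgroup of $A$, normal in $G$, on which $B$ acts by unipotent $\mathbb{F}_p$-linear automorphisms (since every element of $B$ has $p$-power order in characteristic $p$). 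Here I would produce an infinite $B$-invariant direct-sum decomposition $H=\mathrm{Dr}_j E_j\le E$ with $H\cap B=\{1\}$ and invoke Lemma~\ref{catena-} with $L=B$ to conclude that $B$ is subnormal in $G$.

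The main obstacle lies in this last subcase: extracting the required infinite $B$-invariant direct sum inside $E$ when $B$ has only finite Pr\"ufer rank and the action is non-semisimple. My proposed approach is to consider, for a suitable non-centralising element $b\in B$, the ascending filtration $\ker((b-1)^n)$: given the infinite $\mathbb{F}_p$-dimension of $E$ against the finite rank of $B$, consecutive layers must be non-trivial for infinitely many $n$, and from these one assembles an independent family of $B$-invariant pieces. Should this direct module-theoretic construction resist, the fallback is to exploit the necessarily unbounded subnormal defects of the finite cyclic subgroups of $B$ (forced once $B$ fails to be subnormal) and to couple them with a real-indexed chain of $B$-invariant subgroups of $E$, constructing an explicit $\mathbb{R}$-chain of non-subnormal subgroups of $G$ and contradicting RCC directly.
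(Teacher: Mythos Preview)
Your overall strategy---reduce to showing that $B$ is subnormal and then invoke Lemma~5.1 of \cite{KSwmin}---matches the paper's, and your preliminary reductions are fine. The genuine gap is in the main subcase, where $B$ has finite Pr\"ufer rank and $A$ is not Chernikov. To apply Lemma~\ref{catena-} you need an infinite direct product of non-trivial $B$-invariant subgroups inside $E=\Omega_1(A)$, but a unipotent action of a finite-rank divisible abelian $p$-group on an infinite $\mathbb{F}_p$-space need not admit such a decomposition: the lattice of $B$-submodules may well be a chain, in which case no two non-trivial $B$-invariant subspaces are independent. Your filtration $\ker((b-1)^n)$ produces $B$-invariant \emph{subquotients}, not direct summands, and there is no reason for it to split $B$-equivariantly; so ``assembling an independent family'' from the layers does not go through. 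The fallback is not a proof either: you have not shown that the defects of the cyclic subgroups of $B$ must be unbounded when $B$ is not subnormal, nor explained how an $\mathbb{R}$-chain would be built from that.

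The paper avoids the search for direct decompositions altogether. After reducing to $B$ a single Pr\"ufer group with $A\cap B=C_B(A)=B_G=\{1\}$, it notes that any $\mathbb{R}$-chain $(X_i)$ of $B$-invariant (hence $G$-invariant) proper subgroups of $G'$ yields an $\mathbb{R}$-chain $(X_iB)$ in $G$; by RCC some $X_rB$ is subnormal, whence $X_rB/X_r$ is central in $G/X_r$ by the same \cite{KSwmin} lemma, forcing $G'=[A,B]\le X_r$, a contradiction. Thus the \emph{complete} poset of $B$-invariant subgroups of $G'$ has RCC and so has deviation by Proposition~\ref{compl}. Tushev's theorem (\cite{T03}, Theorem~4.3) then gives a finite $B$-series $\{1\}=Y_0\le\cdots\le Y_k=G'$ whose factors have min or max on $B$-submodules; via Lemma~\ref{a1} and the result of \cite{KW} on Baer groups with weak chain conditions on normal subgroups, each $Y_iB/Y_{i-1}$ is nilpotent, hence abelian, so $Y_{i-1}B\trianglelefteq Y_iB$ and $B$ is subnormal in $BG'$, hence in $G$. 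The idea you are missing is to use RCC not to locate direct sums, but to force deviation on the $B$-submodule lattice of $G'$ and then feed this into module-theoretic machinery.
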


\begin{proof} 
Let $G$ be a counterexample; in particular, the result just quoted above gives that $B$ is not subnormal in $G$ and so $G$ is not nilpotent. Since $B$ is a direct product of Pr\"ufer subgroups, some of them $P$ does not centralize $A$. Hence also $AP$ is couterexample and so it can be assumed that $B=P$ is a Pr\" ufer group. Since $A\cap B\leq C_B(A)\leq B_G$ and $G/B_G$ is still a counterexample, it can be assumed also that $A\cap B=C_B(A)=B_G=\{1\}$. 

Assume that there exists an $\mathbb{R}$-chain $(X_i)_{i\in\mathbb{R}}$ of $
B$-invariant (proper) subgroups of $G'$; clearly each $X_i$ is a normal subgroup of $G$. Since $A\cap B=\{1\}$ and $G'\leq A$, we obtain
that $(X_iB)_{i\in\mathbb{R}}$ is an $\mathbb{R}$-chain of subgroups of $G$
and hence, since $G$ has RCC on non-subnormal subgroups, there exists an $r\in\mathbb{R}$ such that $X_rB$ is a subnormal subgroup of $G$. Therefore 
$X_rB/X_r\simeq B$ is a subnormal Pr\"ufer subgroup of the periodic Baer group $G/X_r$, hence $X_rB/X_r\leq Z(G/X_r)$. It follows that $G'=[A,B]\leq [X_rB,G]\leq X_r$ and hence $X_r=G'$, a contradiction. Therefore the (complete) poset of all $B$%
-invariant subgroups of $G'$ does not contain $\mathbb{R}$-chains and so it has deviation by
Propositon \ref{compl}. Since $G'$ can be considered as a $\mathbb{Z}B$%
-module, as $B$ acts on $G'$ by conjugation, it follows that $G'$ contains a
finite series of $B$-invariant subgroups 
\begin{equation*}
\{1\}=Y_0\leq Y_1\leq \cdots\leq Y_k=G'
\end{equation*}
whose factors $Y_{i}/Y_{i-1}$ satisfy either the minimal or the maximal
condition on $B$-invariant subgroups (see \cite{T03}, Theorem 4.3). 
Let $i\leq k$. Since any Pr\"ufer group satisfy both the weak minimal and
the weak maximal condition, it follows from Lemma \ref{a1} that the factor
group $Y_{i}B/Y_{i-1}$ satisfy either the weak minimal or the weak maximal
condition on normal subgroups. Hence the Baer group $Y_{i}B/Y_{i-1}$ is
nilpotent (see \cite{KW}) and so even abelian since $Y_{i-1}B/Y_{i-1}\simeq
B$. It follows that $Y_{i-1}B$ is normal in $Y_{i}B$ for every $i\leq k$. Thus $B=BY_0$ is subnormal in $BY_k=BG'$ and hence also in $G$, a contradiction which concludes the proof.
\end{proof}
\medskip
We are now in a position to prove the main result of this section.
\medskip
%
\begin{theorem}
\label{teosn} Let $G$ be a periodic soluble group. Then the following are
equivalent:

\begin{itemize}

\item[(i)] $G$ satisfies the weak minimal condition on non-subnormal
subgroups;

\item[(ii)] $G$ satisfies the weak maximal condition on non-subnormal
subgroups;

\item[(iii)] $G$ satisfies the weak double condition on non-subnormal
subgroups.

\item[(iv)]  $G$ satisfies the real chain condition on non-subnormal subgroups.

\item[(v)] either $G$ is a Chernikov group or all subgroups of $G$ are subnormal.
\end{itemize}
\end{theorem}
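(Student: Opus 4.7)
The plan is to reduce the theorem to the implication $(iv)\Rightarrow(v)$: the other equivalences are either immediate or are the content of \cite{KSwmin} and \cite{KSwmax}, while $(iii)\Rightarrow(iv)$ follows from Proposition \ref{WDDC_implies_deviation}. Thus suppose $G$ is a periodic soluble group satisfying RCC on non-subnormal subgroups; we must show that either $G$ is Chernikov or every subgroup of $G$ is subnormal. If $G$ is not Chernikov then, since $G$ is locally finite, Corollary \ref{corsn} forces $G$ to be a Baer group, and it remains to show that every subgroup of $G$ is subnormal.

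The idea is to proceed by induction on the derived length $d$ of $G$. The case $d=1$ is trivial. For $d\geq 2$, let $A=G^{(d-1)}$ be the last non-trivial term of the derived series, an abelian normal subgroup of $G$. Since subnormality of a subgroup of $G/A$ lifts to subnormality in $G$, the quotient $G/A$ again satisfies RCC on non-subnormal subgroups, and is itself a periodic soluble Baer group of smaller derived length; by inductive hypothesis applied to $G/A$ (together with the fact that a locally nilpotent Chernikov group is nilpotent, so even in the Chernikov alternative every subgroup of $G/A$ is subnormal), every subgroup of $G/A$ is subnormal. Consequently, for each $H\leq G$ the subgroup $HA$ is subnormal in $G$, and the remaining task is to show that $H$ is subnormal in $HA$.

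The main obstacle is this last step, and the key tool is Lemma \ref{pru}. Any abelian divisible subgroup $B\leq H$ generates with $A$ a subgroup $AB$ satisfying the hypotheses of that lemma, hence $AB$ is abelian and $B$ centralizes $A$. Combining this constraint on divisible parts with the Baer property (every finitely generated subgroup of $H$ is subnormal and nilpotent) and with the RCC restricted to $HA$, one deduces that the descending series $A\geq[A,H]\geq[A,H,H]\geq\cdots$ terminates at $\{1\}$: were it not to terminate, one could construct, essentially as in the proof of Lemma \ref{pru} and via Lemma \ref{1} applied inside $HA$, an $\mathbb{R}$-chain of non-subnormal subgroups of $G$ lying between $H$ and $HA$, contradicting $(iv)$. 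Termination yields $H$ subnormal in $HA$ and therefore in $G$, completing the induction. The delicate point is extracting the termination uniformly for every $H$, that is, turning a hypothetical non-terminating commutator chain into an actual $\mathbb{R}$-chain of non-subnormal subgroups rather than merely a countable descending one.
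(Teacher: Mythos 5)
Your reduction to $(iv)\Rightarrow(v)$, the use of Corollary \ref{corsn} to get a Baer group, and the passage to the last term $A$ of the derived series (so that only ``$H$ subnormal in $HA$'' remains) all match the paper's strategy. The problem is the final step, which is exactly where you admit the argument is ``delicate'': you assert that if the series $A\geq[A,H]\geq[A,H,H]\geq\cdots$ does not terminate, one can manufacture an $\mathbb{R}$-chain of non-subnormal subgroups between $H$ and $HA$. No mechanism for this is given, and none is apparent: a non-terminating commutator series only yields a \emph{countable} strictly descending chain $[A,{}_nH]H$, and RCC says nothing about $\mathbb{Z}$- or $\mathbb{N}$-chains (indeed the whole point of the paper is that RCC is a priori much weaker than DCC-$\infty$). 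The only place in this section where an $\mathbb{R}$-chain is actually produced is inside Lemma \ref{pru}, and there it comes from the completeness of the poset of $B$-invariant subgroups of $G'$ together with Tushev's theorem on modules with deviation --- not from a descending commutator series. So as written, the crucial implication is a gap, not a proof.

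A second, related gap: your appeal to Lemma \ref{pru} only controls \emph{divisible} abelian subgroups of $H$, so it gives no information when $H$ has no divisible part (e.g.\ $H$ infinite elementary abelian). The paper avoids both problems by a double minimal-counterexample argument: it minimizes the derived length of $G$ \emph{and} of a non-subnormal subgroup $X$, uses the subnormality of $X'$ (of defect $k$) to build the finite chain $A_iX$ with $A_i=[A,{}_iX']$ and $A_k=\{1\}$, locates a single step where subnormality fails, and thereby reduces to $X$ abelian. Then Lemma \ref{2} (infinite direct products, not $\mathbb{R}$-chains of commutators) forces $X$ to be Chernikov, Lemma \ref{pru} makes the divisible part $D$ of $X$ centralize $A$, and the Baer property finishes off the finite complement $F$ since $G=(AD)F$ is then nilpotent. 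You would need to incorporate a reduction of this kind (to an abelian Chernikov non-subnormal subgroup) before Lemma \ref{pru} can do any work; without it the proposal does not close.
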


\begin{proof}
Clearly, $(v)$ implies both $(i)$ and $(ii)$, and and each of them imply $(iii)$; moreover, $(iii)$ imply $(iv)$ by Proposition \ref{WDDC_implies_deviation}. Thus we have to prove that $(iv)$ imply $(v)$. Let $G$ be a non-Chernikov periodic soluble group satisfying $(iv)$, so that $G$ is a Baer group by Corollary\; \ref{corsn}. 

Assume for a contradiction that the statement is false, and among all
counterexamples for which $G$ has smallest derived length choose one such
that $G$ contains a non-subnormal subgroup $X$ whose derived length is
minimal possible. If $A$ is the smallest non-trivial term of the derived
series of $G$, the minimality of the derived length of $G$ gives that $XA$
is subnormal in $G$, so that $X$ cannot be subnormal in $XA$. Moreover, the
minimality of the derived length of $X$ yields that $X^{\prime}$ is
subnormal in $G$ of defect $k$, say. The intersection $X \cap A$ is a normal
subgroup of $XA$ and the factor $XA/(X \cap A)$ is again a minimal
counterexample, so it can be assumed that $G = XA$ and $X \cap A =\{1\}$. 
%
Put $A_0 = A$ and $A_i = [A,_iX']$ for each positive integer $i\leq k$.
Clearly every $A_i$ is a normal subgroup of $G$ and $A_k =\{1\}$. The
consideration of the chain 
\begin{equation*}
G = A_0X \geq A_1X\geq \cdots\geq A_kX= X
\end{equation*}
gives that there exists a positive integer $j\leq k$ such that $A_jX$ is not
subnormal in $A_{j-1}X$. Since $A_jX'$ is contained in $A_jX$ and it is
normal in $A_{j-1}X$, the factor group $A_{j-1}X/A_jX'$ is likewise a minimal
counterexample. Thus we may replace $G$ by $A_{j-1}X/A_jX'$ and $X$ by $%
A_{j}X/A_jX'$, i.e. it can be supposed that $X$ is abelian.

Since any abelian group which is not minimax has an homomorphic image which
is the direct product of infinitely many non-trivial subgroups and since $X$
is not subnormal in $G$, Lemma \ref{2} gives that $X$ is a Chernikov group.
Let $D$ be the largest divisible subgroup of $X$ and let $F$ be a finite
subgroup, such that $X = DF$. Then $[A,D]=\{1\}$ by Lemma \ref{pru}, so that 
$AD$ is a normal abelian subgroup of $G=AX=(AD)F$ and hence $G$ is nilpotent
because $F$ is finite and $G$ is a Baer group. This contradiction completes
the proof. %
\end{proof}




\begin{thebibliography}{99}

%

\bibitem{C} V.S. Carin, On soluble groups of type A$_{3}$, \textit{Mat. Sb.} 
\textbf{54} (1961), 489-499.

\bibitem{Casolo} C. Casolo, Groups with finite conjugacy classes of
subnormal subgroups, \textit{Rend. Sem. Mat. Univ. Padova} \textbf{81}
(1989), 107-149.

\bibitem{CK} G. Cutolo and L.A. Kurdachenko, Weak chain conditions for
non-almost normal subgroups, In: \textit{Proceedings of Groups St. Andrews
in Galway}, Cambridge University Press (1993), 120-130.

\bibitem{DDM23} U. Dardano and F. De Mari, Groups with the real chain condition on non-pronormal subgroups, submitted.

\bibitem{DDMR} U. Dardano, F. De Mari and S. Rinauro, The weak minimal
condition on subgroups which fail to be close to normal subgroups, \textit{%
J. Algebra} \textbf{560} (2020), 371-382.

\bibitem{DM07} F. De Mari, Groups satisfying weak chain conditions on
certain non-normal subgroups, \textit{Boll. Un. Mat. Ital.} (8) \textbf{10}
(2007), 853-866.

\bibitem{DM18} F. De Mari, Groups satisfying weak chain conditions on
non-modular subgroups. \textit{Comm. Algebra} \textbf{46} (2018),
1709-1715.

\bibitem{DM20} F. De Mari, Groups with the weak minimal condition on
non-normal non-abelian subgroups \textit{Beitr. Algebra Geom.} \textbf{61}
(2020), 1-7.


\bibitem{DS} M.R. Dixon and I.Y. Subbotin, Groups with finiteness conditions
on some subgroup system: a contemporary stage, \textit{Algebra Discrete
Math. }\textbf{4} (2009), 29-54.

\bibitem{F73} L. Fuchs, \textit{Abelian groups}, Springer, Berlin-New York
(2015)

\bibitem{GKR} F. de Giovanni, L.A. Kurdachenko and A. Russo, Groups with
pronormal deviation, \textit{J. Algebra} \textbf{613}(2023), 32-45.

\bibitem{GKR1} F. de Giovanni, L.A. Kurdachenko and A. Russo, Groups with
subnormal deviation, \textit{Mathematics} \textbf{11}(2023), article n. 2635.

\bibitem{dGR} F. de Giovanni and C. Rainone, Infinite groups with many
generalised normal subgroups, \textit{Int. J. Group Theory} \textbf{1}
(2012), 39-49.

\bibitem{KG} L.A. Kurdachenko and V. \`{E}. Goretski\u{\i}, Groups with weak
minimality and maximality conditions for subgroups that are not normal, 
\textit{Ukrainian Math. J.} \textbf{41} (1989), 1474-1477 (1990).

\bibitem{KSwmin} L.A. Kurdachenko and H. Smith, Groups with the weak minimal condition for non-subnormal subgroups, \textit{Annali Mat.} \textbf{173} (1997), 299-312.

\bibitem{KSwmax} L.A. Kurdachenko and H. Smith, Groups with the weak maximal condition for non-subnormal subgroups, \textit{Ricerche Mat.} \textbf{47} (1998), 29-49.

\bibitem{KS} L.A. Kurdachenko and H. Smith, Groups with small deviation for non-subnormal subgroups, \textit{Cent. Eur. J. Math.} \textbf{7} (2009), 186-199.

\bibitem{KW} L.A. Kurdachenko and V. Walter, Baer groups satisfying weak
chain conditions, \textit{Ricerche Mat.} \textbf{53} (2004), 1-5 (2005).
%

\bibitem{CR} J.C. McConnell and J.C. Robson, \textit{Noncommutative
Noetherian Rings}, John Eiley and Sons, Chichester (1988).

\bibitem{M} C. Musella, Isomorphisms between lattices of nearly normal
subgroups, \textit{Note Mat.} \textbf{20} (2000/01), 43-52.

\bibitem{Pr} E. Previato, Gruppi in cui la relazione di Dedekind \`{e}
transitiva. \textit{Rend. Sem. Mat. Univ. Padova} \textbf{54} (1975),
215-229.

\bibitem{R72} D.J.S. Robinson, \textit{Finiteness conditions and generalized
soluble groups}, Springer, Berlin-New York (1972).

\bibitem{R96} D.J.S. Robinson, \textit{A course in the theory of groups},
Springer, Berlin-New York (1996).

\bibitem{S94} R. Schmidt, \textit{Subgroup Lattices of Groups}, W. de
Gruyter, Berlin (1994).

\bibitem{Shunkov} V.P. Shunkov, Locally finite groups with a minimality
condition for abelian subgroups. \textit{Algebra and Logic} \textbf{9}
(1970), 129-141.

\bibitem{T} M.J. Tomkinson, $FC$-\textit{groups}, Pitman, London (1984).

\bibitem{T03} A.V. Tushev, On deviation in groups, \textit{\ Illinois J.
Math. } \textbf{47} (2003),539-550.

\bibitem{Z} D.I. Zaicev, On the theory of minimax groups, \textit{Ukrainian
Math. J. }\textbf{23} (1971), 536-542.

\bibitem{Z74} D.I. Zaicev, Solvable subgroups of locally solvable groups, \textit{Dokl. Akad. Nauk SSSR} \textbf{214} (1974), 1250-1253.

\end{thebibliography}
\end{document}